\newtheorem{theorem}{Theorem}[section]
\newtheorem{definition}[theorem]{Definition}
\newtheorem{example}[theorem]{Example}
\newtheorem{remark}[theorem]{Remark}
\newenvironment{proof}[1][Proof]{\begin{trivlist}
\item[\hskip \labelsep {\bfseries #1}]}{\end{trivlist}}
\title{On Moving Frames and Noether's Conservation Laws}
\author{T\^ania M.N.\ Gon\c calves$^a$ \and Elizabeth L.\ Mansfield$^b$}
\date{}
\begin{document}
\maketitle
\begin{center}
\footnotesize{$^a$School of Mathematics, Statistics and Actuarial Science\\ University of Kent\\ Canterbury, CT2 7NF, U.K.\\T.M.N.Goncalves@kent.ac.uk}
\end{center}
\begin{center}
\footnotesize{$^b$School of Mathematics, Statistics and Actuarial Science\\ University of Kent\\ Canterbury, CT2 7NF, U.K.\\E.L.Mansfield@kent.ac.uk}
\end{center}

{\footnotesize \textbf{Keywords}: Moving frames, invariant calculus of variations, Noether's Theorem, integration problem, $SL(2,\mathbb{C})$ group actions.}

\begin{abstract}
 Noether's Theorem yields conservation laws for a Lagrangian with a variational symmetry group. The explicit formulae for the laws are well known and the symmetry group is known to act on the linear space generated by the conservation laws. The aim of this paper is to explain the mathematical structure of both the Euler-Lagrange system and the set of conservation laws, in terms of the differential invariants of the group action and a moving frame. For the examples we demonstrate, knowledge of this structure allows the Euler-Lagrange equations to be integrated with relative ease. Our methods take advantage of recent advances in the theory of moving frames by Fels and Olver, and in the symbolic invariant calculus by Hubert. The results here generalise those appearing in Kogan and Olver \cite{KoganOlver} and in Mansfield \cite{Mansfield}. In particular, we show results for high dimensional problems and classify those for the three inequivalent $SL(2)$ actions in the plane. \end{abstract}

\section{Introduction}
In 1918, Emmy Noether proved in \cite{Noether} that for systems derived from a variational principle, conservation laws may be obtained from Lie group actions that leave the functional invariant. Since then, Noether's Theorem has been widely used among applied mathematicians and physicists.

In this paper we present the mathematical structure behind both the Euler-Lagrange equations and the set of conservation laws which come from the application of Noether's Theorem. We show that the new format presented here for the Euler-Lagrange equations and the set of conservation laws can simplify greatly the extremising problem. In particular, we give results for variational problems that are invariant under a Lie symmetry group whose Lie algebra is semisimple, such as $\frak{sl}(n)$, $\frak{so}(n)$, $\frak{su}(n)$, $\frak{sp}(n)$, which are extensively found in physical examples.

Section \ref{Pseccao} of this paper gives a brief introduction to the theoretical foundations of our results: the application of moving frames to actions on jet spaces which yields a ``symbolic invariant calculus'' for differential invariants and their invariant derivatives, the Adjoint action of a Lie group on its Lie algebra, and the Killing form of the Lie algebra. Furthermore, we show how the symbolic invariant calculus can be applied to obtain the Euler-Lagrange equations for variational problems with a Lie group symmetry directly in terms of the invariants. Then in section \ref{novoteorema} we state and demonstrate our main result; Noether's conservation laws can always be written as a divergence of the product of a moving frame with a vector of invariants, where the representation for the moving frame is the inverse of the Adjoint representation of the Lie group on its Lie algebra. The one dimensional case was proved in \cite{Mansfield}, here we extend the result to higher dimensional problems. The main pedagogic example used throughout is the projective action of $SL(2)$ acting on curves in the plane, and on surfaces in $3$-space. In section \ref{ultimo} we show how the integration problem can be reduced for the case of one dimensional Lagrangians that are invariant under a Lie symmetry group whose Lie algebra is semisimple. Finally we classify integration results for Lagrangians that are left unchanged under the three inequivalent $SL(2)$ actions on the plane.

\subsection{Motivating Example}\label{sec:motiv}
Consider the group $SE(2)$, the special (orientation preserving) Euclidean group, acting on the space of curves in the $(x, u(x))$-plane,
\begin{equation}\label{inversase(2)}
\left(\begin{array}{c} x\\ u\end{array}\right)\mapsto 
\left(\begin{array}{c}\widetilde{x}\\ \widetilde{u}\end{array}\right)=\left(\begin{array}{cc} \cos \theta & -\sin\theta\\
\sin\theta & \cos\theta\end{array}\right)\left(\begin{array}{c} x\\ u\end{array}\right) + \left(\begin{array}{c} a\\ b\end{array}\right),
\end{equation}
where $\theta$, $a$ and $b$ are constants that parametrise the group action. 
The Euclidean curvature of a curve $x\mapsto (x, u(x))$, given by
$$\kappa = \frac{u_{xx}}{\left(1+u_x^2\right)^{3/2}},$$
is the lowest order differential invariant, where a differential invariant is an invariant for the prolonged action of a Lie group on a jet-space. All differential invariants for the action (\ref{inversase(2)}) are functions of $\kappa$ and its derivatives with respect to arc length, $s$, where
$$\frac{{\rm d}}{{\rm d}s}=\frac1{\sqrt{1+u_x^2}}\frac{{\rm d}}{{\rm d}x}.$$

Under this action the one dimensional variational problem $\int\,\kappa^2\mathrm{d}s$ has $SE(2)$ as a variational symmetry group. When the  conservation laws arising from the Lie symmetry are calculated using the formulae associated with Noether's Theorem (see \cite{Olver}, \S5.4, and Prop.\ 5.98; the formulae appear complicated but are relatively easily coded), the result  can be arranged in matrix form as $A(x,u,u_x)\boldsymbol{\upsilon}(I)=\mathbf{c}$, where $\boldsymbol{\upsilon}(I)$ is a vector of invariants
and $\mathbf{c}$ are the constants of integration, specifically,
\begin{equation}\label{SE2NTintro}
\begin{pmatrix}
\;x_s & -u_s & 0\;\\
\;u_s & x_s& 0\;\\
\;x u_s - u x_s & u u_s + x x_s & 1\;
\end{pmatrix}
\begin{pmatrix}
-\kappa^2 \\
-2\kappa_s \\
2\kappa 
\end{pmatrix}=
\begin{pmatrix}
c_1\\
c_2\\
c_3 
\end{pmatrix}
\end{equation}

where $u_s=u_x/\sqrt{1+u_x^2}$ and $x_s = 1/\sqrt{1+u_x^2}$, and where this defines $A$ and $\boldsymbol{\upsilon}(I)$. The first conservation law comes from the translation in $x$, the second from the translation in $u$, and the third results from the rotation in the $(x,u)$-plane. The Euler-Lagrange equation for this variational problem was obtained by Euler himself, and is $\kappa_{ss}+\textstyle\frac12\kappa^3=0$, which can be solved in terms of elliptic functions; the extremal curves are also known as Euler's elastica. If one takes a solution for $\kappa$ and inserts it into Equation (\ref{SE2NTintro}) above, then one has three equations for $x$, $x_s$, $u$ and $u_s$ as functions of $s$. Combining these with the defining constraint for $s$, which is $x_s^2+u_s^2=1$, and simplifying, we obtain 
\begin{eqnarray} 
\kappa^4+4\kappa_s^2-(c_1^2+c_2^2)&=&0, \label{firstintEL}  \\  
c_1 u -c_2 x+c_3-2\kappa &=&0, \label{Segunda}\\
u_s(c_1^2+c_2^2)+c_2\kappa^2-2 c_1\kappa_s&=&0.\label{MCpull}
\end{eqnarray}

It can be seen the integration problem is now completely straightforward once $\kappa$ is known. We will show in this paper that results like this are not unusual. 

The matrix $A$ in Equation (\ref{SE2NTintro}) is \emph{equivariant\/}, namely, if one applies the group action to the components then the group action factors out; in this case we have
$$A(\widetilde{x},\widetilde{u},\widetilde{u}_{\widetilde{x}})=R(\theta, a, b)^{-1}A(x,u,u_x),$$ 

where
$$R(\theta, a, b)^{-1}=
\begin{pmatrix}
\cos\theta & \sin\theta& 0\\ 
-\sin\theta&\cos\theta & 0\\ 
b& -a&1
\end{pmatrix}.$$

The matrix $R(\theta,a,b)$ is a representation of $SE(2)$. Indeed, the group product in parameter space is given by
$$(\theta, a,b)\cdot (\phi, \alpha,\beta)=(\theta + \phi, a+\alpha\cos\theta -\beta\sin\theta,b+\alpha\sin\theta+\beta\cos\theta),$$

and it is simple to check that
$$R(\theta,a,b)R(\phi,\alpha,\beta)=R\left((\theta, a,b)\cdot (\phi, \alpha,\beta)\right).$$

In fact, the representation is well-known as the so-called Adjoint representation, see \S 3.3 of \cite{Mansfield}. The map $A$ is thus an example of a {\em moving frame\/}, which is an equivariant map from the space $M$ on which a Lie group $G$ acts, to $G$.

The example of $SE(2)$ invariant Lagrangians with the independent variable being Euclidean arc length was first carried out in \cite{KoganOlver} and is also fully explored in \cite{Mansfield}.

\section{Moving Frames, the Adjoint Action and the Invariant Calculus of Variations}\label{Pseccao}
In this section, we will give a brief description of the concepts needed to explain our results, namely moving frames following the development in \cite{FelsOlver} (and also \cite{Mansfield}), the Adjoint action of a Lie group and the Killing form on its Lie algebra, and the symbolic invariant calculus. We will use the results of the pedagogical examples in the following sections.

A smooth group action on a smooth space induces an action on the set of smooth curves and surface elements  in that space including their higher order derivatives in the relevant jet bundle, the so-called prolonged curves and surfaces. In this paper, the set $M$ on which $G$ acts consists of these prolonged curves and surfaces.

\subsection{Moving Frames}\label{moving}
 A \emph{group action} of $G$ on $M$  is a map $G\times M\rightarrow M$, written as $(g,z)\mapsto g\cdot z$, which satisfies either $g\cdot (h\cdot z)=(gh)\cdot z$, called a \emph{left action}, or $g\cdot (h\cdot z)=(hg)\cdot z$, called a \emph{right action}. We will also write $g\cdot z$ as $\widetilde{z}$ to ease the exposition in places.

We assume that $G$ is a Lie group and that the action is smooth. Further, we assume the action is {\em free\/} and {\em regular\/} in some domain $\mathcal{U}\subset M$, which means, in effect, that 
 
\begin{enumerate}
\item
the intersection of the orbits with  $\mathcal{U}$ have the dimension of the group $G$ and further foliate $\mathcal{U}$;
\item
there exists a surface $\mathcal{K}\subset \mathcal{U}$ that intersects the orbits of $\mathcal{U}$ transversally, and the intersection of an orbit of $\mathcal{U}$ with $\mathcal{K}$ is a single point. This surface $\mathcal{K}$ is known as the \emph{cross-section} and has dimension equal to $\mbox{dim}(M)-\mbox{dim}(G)$;
\item
if we let $\mathcal{O}(z)$ denote the orbit through $z$, then the element $h \in G$ that takes $z\in \mathcal{U}$ to $\{k\}=\mathcal{O}(z)\cap \mathcal{K}$ is unique.
\end{enumerate}

Under these conditions,  an equivariant map $\rho:\mathcal{U}\rightarrow G$ can be defined. Such a map is called a {\em moving frame\/} on $\mathcal{U}$. Specifically, we can define the map $\rho:\mathcal{U} \rightarrow G$ to be the unique element in $G$ which satisfies
$$\rho(z)\cdot z=k,\qquad \{k\}=\mathcal{O}(z)\cap \mathcal{K}.$$
We say $\rho$ is the \emph{right moving frame} relative to the cross-section $\mathcal{K}$. By construction, we have for a left action that $\rho(g\cdot z)=\rho(z)g^{-1}$, and for a right action that $\rho(g\cdot z)=g^{-1}\rho(z)$, so that $\rho$ is indeed equivariant. The cross-section $\mathcal{K}$ is not unique, and is usually selected to simplify the calculations for a given application. In practice, the procedure to find the frame is as follows:

\begin{enumerate}
\item
define the cross-section $\mathcal{K}$ to be the locus of the set of equations $\psi_i(z)=0$, for $i=1,...,r$, where $r$ is the dimension of the group $G$;
\item
find the group element in $G$ which maps $z$ to $k \in \mathcal{K}$ by solving the \emph{normalisation equations},
$$\psi_i(\widetilde{z})=\psi_i(g\cdot z)=0,\qquad i=1,...,r.$$
\end{enumerate}

Hence, the frame $\rho$ satisfies $\psi_i(\rho(z)\cdot z)=0$, $i=1,...,r$.

\begin{example}\label{SL2}
Consider the group $SL(2)$ acting projectively on the plane as follows
$$\widetilde{x}=g\cdot x=x,\qquad \widetilde{u}=g\cdot u=\displaystyle{\frac{au+b}{cu+d}},$$
where 

\begin{equation}\label{sltwoelt} g=
\begin{pmatrix}
a & b \\
c & d
\end{pmatrix}, \qquad ad-bc=1.\end{equation} 

The induced actions on $u_x$ and $u_{xx}$, defined to be that obtained using the chain rule, are respectively
$$g\cdot u_x=\displaystyle{\widetilde{u_x}=\widetilde{u}_{\widetilde{x}}=\frac{u_x}{(cu+d)^2}},$$
$$g\cdot u_{xx}=\displaystyle{\widetilde{u_{xx}}=\widetilde{u}_{\widetilde{x}\widetilde{x}}=\frac{u_{xx}(cu+d)-2cu_x^2}{(cu+d)^3}}.$$
If we take $M$ to be the space with coordinates $(x,u,u_x,u_{xx},u_{xxx},...)$, then the action is locally free near the identity of $SL(2)$ and regular away from the coordinate plane $u_x=0$. Hence we consider $u_x\ne0$, and take the normalisation equations to be $\widetilde{u}=0$, $\widetilde{u_x}=1$, and $\widetilde{u_{xx}}=0$, to obtain
\begin{equation}\label{ProjSL2frame}
\displaystyle{a=\frac{1}{\sqrt{u_x}}},\qquad \displaystyle{b=-\frac{u}{\sqrt{u_x}}},\, and\quad\displaystyle{c=\frac{u_{xx}}{2u_x^{3/2}}}
\end{equation}
as the frame in parametric form, or in matrix form, substituting for $a$, $b$ and $c$ into (\ref{sltwoelt}),
$$\rho(u,u_x,u_{xx})=\left(\begin{array}{cc}
\displaystyle{\frac{1}{\sqrt{u_x}}} &\displaystyle{-\frac{u}{\sqrt{u_x}}}\\[10pt]
\displaystyle{\frac{u_{xx}}{2u_x^{3/2}}} &\displaystyle{\frac{2u_x^2-uu_{xx}}{2u_x^{3/2}}}\end{array}\right).$$
The square root indicates that the domain of the frame is restricted, and the choice of root is such as to
 ensure that $\rho$ is the identity element on the cross-section $\mathcal{K}$. If $x$, $u$ are considered to be real and $u_x<0$ for the application at hand, one can use the frame equation $\widetilde{u_x}=-1$ instead.
\end{example}

\begin{theorem}
 Let $\rho$ be a right moving frame. Then the quantity $I(z)=\rho(z)\cdot z$ is an invariant of the group action (see \cite{FelsOlver}). 
\end{theorem}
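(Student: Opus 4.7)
The plan is to show invariance directly from the definition: I need to verify that $I(g \cdot z) = I(z)$ for every $g \in G$ and every $z \in \mathcal{U}$, and the only tool required is the equivariance of $\rho$ together with the defining axiom of a group action, both of which are already available from the preceding discussion.

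First I would treat the case of a left action. By equivariance, $\rho(g \cdot z) = \rho(z)\, g^{-1}$. Substituting into the definition gives
\begin{equation*}
I(g\cdot z) \;=\; \rho(g\cdot z)\cdot(g\cdot z) \;=\; \bigl(\rho(z)\,g^{-1}\bigr)\cdot(g\cdot z).
\end{equation*}
Applying the left-action axiom $h_1\cdot(h_2\cdot z) = (h_1 h_2)\cdot z$ with $h_1 = \rho(z)\,g^{-1}$ and $h_2 = g$, the $g^{-1}$ and $g$ cancel in the group, leaving $\rho(z)\cdot z = I(z)$.

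For a right action the argument is essentially the same but with the factors reversed. Here equivariance reads $\rho(g\cdot z) = g^{-1}\rho(z)$, and the right-action axiom is $h_1\cdot(h_2\cdot z) = (h_2 h_1)\cdot z$. Taking $h_1 = g^{-1}\rho(z)$ and $h_2 = g$ again collapses the product to $\rho(z)$, yielding $I(g\cdot z) = \rho(z)\cdot z = I(z)$. Since $g$ and $z$ were arbitrary within the domain where the frame is defined, $I$ is invariant.

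There is no real obstacle: the proof is a two-line manipulation. The only point requiring care is bookkeeping of the order of multiplication, since left and right actions interact differently with the equivariance relation, so I would split the argument explicitly into the two cases rather than trying to unify them. A brief remark could be added that $I$ takes values in the cross-section $\mathcal{K}$, because $\rho(z)\cdot z$ is by construction the unique point $\{k\} = \mathcal{O}(z)\cap \mathcal{K}$; this makes the invariance geometrically transparent, as $I$ is simply the projection along orbits onto $\mathcal{K}$.
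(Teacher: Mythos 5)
Your proof is correct, and it is essentially the standard argument: the paper itself offers no proof of this statement, deferring to Fels and Olver, so there is nothing internal to compare against. Your two case computations use exactly the equivariance relations the paper records ($\rho(g\cdot z)=\rho(z)g^{-1}$ for a left action, $\rho(g\cdot z)=g^{-1}\rho(z)$ for a right action) together with the corresponding action axiom, and the cancellation you perform is the whole content of the result. The only implicit ingredient worth acknowledging is that the equivariance relations themselves rest on freeness and regularity (uniqueness of the group element carrying $z$ to $\mathcal{O}(z)\cap\mathcal{K}$), which the paper asserts ``by construction''; since you take them as given, as the paper does, your argument is complete. Your closing remark that $I(z)=\rho(z)\cdot z$ is the projection along orbits onto the cross-section $\mathcal{K}$ is a good addition and is consistent with the paper's subsequent coordinate description $\rho(z)\cdot z=(c_1,\dots,c_r,I(z_{r+1}),\dots,I(z_n))$.
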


If $z=(z_1,...,z_n)$, i.e. $z$ is given in coordinates, and the normalisation equations are $\widetilde{z_i}=c_i$ for $i=1,...,r$, where $r$ is the dimension of the group, then 
$$\rho(z)\cdot z=(c_1,...,c_r,I(z_{r+1}),...,I(z_n)),$$
where
$$I(z_k)=g\cdot z|_{g=\rho(z)},\; \mathrm{for}\;k=r+1,...,n.$$
In this paper we are interested in Lie group actions on jet bundles. We denote the independent variables as $\mathbf{x}=(x_1, x_2, \dots, x_p)$, and the dependent variables as $\mathbf{u}=(u^1, \dots, u^q)$. We denote the derivative terms as
$$ u^{\alpha}_K = \frac{\partial^{|K|}}{\partial x_1^{k_1}\cdots \partial x_p^{k_p}} u^{\alpha}= \partial_K u^{\alpha}$$
where this defines $\partial_K$, with $K$ being  a multi-index of differentiation, $K=(k_1, \dots, k_p)$ and $|K|=k_1+\dots + k_p$. Then coordinates on the $n$-th jet bundle $J^n(\mathbf{x},\mathbf{u})$ are the $x_i$, the $u^{\alpha}$, and the $u^{\alpha}_K$, where $|K|\le n$. Thus, the operator $\partial/\partial x_i$ extends on this space to the \emph{total differentiation operator}
$$D_i=\frac{D}{Dx_i}=\frac{\partial}{\partial x_i}+\sum_{\alpha=1}^q\sum_K u^\alpha_{Ki}\frac{\partial}{\partial u^\alpha_K}.$$
We denote the invariantised jet bundle coordinates as
\begin{equation}\label{notainvariant}
J_i=I(x_i)=\widetilde{x_i}|_{g=\rho(z)},\qquad I^{\alpha}_K=I(u^{\alpha}_K)=\widetilde{u^{\alpha}_K}|_{g=\rho(z)}.
\end{equation}
These  are also known as the \emph{normalised differential invariants}.

\begin{example}\label{exone}
Consider the action of the $SL(2)$ group on the plane, as in Example \ref{SL2}. We have 
$$\begin{array}{rcl}
g\cdot z|_{g=\rho(z)} & = & (\widetilde{x},\widetilde{u},\widetilde{u_x},\widetilde{u_{xx}},\widetilde{u_{xxx}})|_{g=\rho(z)}\\[10pt]
& = &\left(I(x), I^u, I^u_1, I^u_{11}, I^u_{111}\right)\\[10pt]
& = & \displaystyle{\left(x,0,1,0,\frac{u_{xxx}}{u_x}-\frac{3}{2}\frac{u_{xx}^2}{u_x^2}\right)}.
\end{array}$$

The last component is the well-known $SL(2)$ invariant known as the Schwarzian derivative of $u$, often denoted as $\{u;x\}$ . The second, third and fourth components correspond to the normalisation equations $\widetilde{u}=0$, $\widetilde{u_x}=1$, and $\widetilde{u_{xx}}=0$ respectively. Continuing, one could obtain $I^u_{1111}=\left(g\cdot u_{xxxx}\right)\big\vert_{g=\rho}$ and so on. In fact $I^u_{1111}=\{u;x\}_x$, and all the higher order invariants can be obtained in terms of $\{u;x\}$ and its derivatives. 
\end{example}

\begin{theorem}
(\textbf{Replacement Theorem} \cite{FelsOlverII})
 If $f(z)$ is an invariant, then 
$$f(z)=f(I(z)).$$
\end{theorem}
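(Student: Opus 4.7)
The plan is to proceed by direct substitution, exploiting the fact that the group invariance of $f$ holds for \emph{every} group element, in particular for the element $\rho(z)$ produced by the moving frame. First I would spell out what it means for $f$ to be an invariant of the group action: by definition $f(g\cdot z)=f(z)$ for every $g\in G$ and every $z\in\mathcal{U}$ where the action is defined.

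Next, because the identity $f(g\cdot z)=f(z)$ holds for all $g\in G$, it is in particular legitimate to substitute $g=\rho(z)$ (noting that $\rho(z)$ is a well-defined element of $G$ by the earlier freeness and regularity hypotheses on the cross-section $\mathcal{K}$). Doing so gives $f(\rho(z)\cdot z)=f(z)$. By the definition of the invariantisation map, $I(z)=\rho(z)\cdot z$, so the previous equality reads $f(I(z))=f(z)$, which is the claim.

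A brief sanity check I would include is coordinate-wise: writing $z=(z_1,\dots,z_n)$ and using the normalisation equations $\widetilde{z_i}=c_i$ for $i=1,\dots,r$, the point $I(z)$ has coordinates $(c_1,\dots,c_r,I(z_{r+1}),\dots,I(z_n))$, and the argument above shows $f$ evaluated on these invariantised coordinates reproduces $f(z)$. I would also remark on the two conventions: for a right moving frame relative to a left action, the equivariance $\rho(g\cdot z)=\rho(z)g^{-1}$ together with a left action gives $\rho(g\cdot z)\cdot(g\cdot z)=\rho(z)g^{-1}g\cdot z=\rho(z)\cdot z$, reconfirming that $I$ is genuinely invariant, so the substitution step is consistent with the conventions fixed in Section \ref{moving}.

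There is really no serious obstacle here; the theorem is a one-line consequence of the definitions once invariance of $f$ and the defining property of the frame are combined. The only subtlety worth flagging is a domain issue: the equality holds on $\mathcal{U}$ where the frame is defined, and the substitution $g=\rho(z)$ is valid precisely because the frame exists and is unique there.
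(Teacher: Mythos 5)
Your proof is correct and is exactly the standard argument: invariance of $f$ for all $g\in G$ specialised to $g=\rho(z)$, combined with $I(z)=\rho(z)\cdot z$, gives the result immediately (the paper itself only cites Fels--Olver for this theorem rather than proving it, and your one-line substitution, together with the domain caveat that it holds on $\mathcal{U}$ where the frame exists, is the intended proof).
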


{\bf Example \ref{exone} (cont.)}
\emph{By applying the normalisation equations to the Schwarzian we obtain
$$\frac{u_{xxx}}{u_x}-\frac{3}{2}\frac{u_{xx}^2}{u_x^2}=\frac{I^u_{111}}{I^u_1}-\frac{3}{2}\frac{I^u_{11}}{I^u_1}=I^u_{111},$$
confirming the result above.}
\vspace{0.5cm}

The Replacement Theorem can be used to express historically known invariants in terms of the $I^{\alpha}_K$ invariants even when the normalisation equations cannot be solved for the frame. 

For the pedagogic examples used in this paper, we are able to solve normalisation equations for the frame and explicitly calculate the $I^\alpha_K$. Perhaps the most significant outcome arising from the seminal paper \cite{FelsOlver} is that a \emph{symbolic\/} invariant calculus for the $I^{\alpha}_K$ can be constructed from the normalisation equations alone, that is, \emph{without\/} knowing the frame explicitly. This symbolic calculus was formulated rigorously by Hubert (\cite{HubertAA, HubertAC, HubertAD, HubertA, HubertB}), and a ``working mathematician's guide'' appears in \cite{Mansfield}. Simply put, we can differentiate the invariants $I^{\alpha}_K$ symbolically in terms of the $I^\alpha_K$ and hence calculate the differential relations that they satisfy using symbolic computation software \cite{AIDA}.

The \emph{invariant differential operators} are obtained in a way analogous to that of the normalised differential invariants.

\begin{definition}
A distinguished set $\{\mathcal{D}_i \, |\, i=1, \dots p\}$ of invariant differential operators is obtained by evaluating the transformed total differential operators on the frame, i.e.
$$\mathcal{D}_i=\widetilde{D_i}|_{g=\rho(z)},$$
where $\widetilde{D_i}$ is defined as follows
$$\widetilde{D_i}=\frac{D}{D\widetilde{x_i}}=\sum_{j=1}^p(\widetilde{D}x)_{ij}D_i.$$
Here $(\widetilde{D}x)_{ij}=((D\widetilde{x})^{-1})_{ij}$.
\end{definition}

The invariant differential operators $\mathcal{D}_i$ map differential invariants to differential invariants.

 We know that
$$\frac{\partial}{\partial x^i} u^{\alpha}_K=u^{\alpha}_{Ki},$$
but the same is not true once we invariantise; $\mathcal{D}_i I^{\alpha}_K \neq I^{\alpha}_{Ki}$, and we have

\begin{equation}\label{invdiffnoncom}
\mathcal{D}_i I^{\alpha}_K=I^{\alpha}_{Ki}+M^{\alpha}_{Ki},\end{equation}

where $M^{\alpha}_{Ki}$ is known as the {\em error term\/}. Equation (\ref{invdiffnoncom}) indicates that the processes of differentiation and invariantisation do not commute. The error terms may be calculated from knowledge of the normalisation equations and the infinitesimal action alone, that is, without solving for the frame explicitly (\cite{Mansfield} \S 5.1.1), and symbolic software that implements the formulae have been written (\cite{AIDA} amongst others).

Consider the two generating differential invariants $I^\alpha_J$ and $I^\alpha_L$ and let $JK=LM$
so that $I^\alpha_{JK}=I^\alpha_{LM}$. This implies that
\begin{equation}\label{SYZ}
\mathcal{D}_KI^\alpha_J-M^\alpha_{JK}=\mathcal{D}_MI^\alpha_L-M^\alpha_{LM}.
\end{equation}
Equations such as (\ref{SYZ}) are called \emph{syzygies} or \emph{differential identities}.

\vspace{0.5cm}
 {\bf Example \ref{exone} (cont.)} \emph{If we now set $u=u(x,\tau)$,
and take the same normalisation equations as before, we obtain
$$ \widetilde{u_{\tau}} |_{g=\rho(z)} = I^u_2 =\frac{u_{\tau}}{u_x}.$$
Further, since both $x$ and $\tau$ are invariant, $\mathcal{D}_\tau=\partial/\partial {\tau}$ and $\mathcal{D}_x=\partial/\partial x$. Next,
 $$\mathcal{D}_\tau I^u_{111}=I^u_{1112}-I^u_{12}I^u_{111},\qquad \mathcal{D}_x^3 I^u_2=I^u_{1112}-3I^u_{12}I^u_{111}-I^u_{1111}I^u_{2}$$
 so that eliminating the $I^u_{1112}$ term, and noting that $\mathcal{D}_x I^u_2=I^u_{12}$ and $\mathcal{D}_x I^u_{111}=I^u_{1111}$ gives a syzygy between $I^u_2$ and $I^u_{111}$. The syzygy is
\begin{equation}\label{syzygysigma}
\mathcal{D}_\tau \sigma=(\mathcal{D}_x^3+2\sigma\mathcal{D}_x+\sigma_x)I^u_2,
\end{equation}
where $\sigma=I^u_{111}=\{u;x\}$, which can be verified directly. In this case, it can be shown that the invariants $I^u_2$ and $I^u_{111}$ generate the set of all differential invariants under invariant differentiation and functional composition.}
\vspace{0.5cm}

Equation (\ref{syzygysigma}) is an example of the presentation of the syzygies we will need to obtain our results. Theorems concerning the finite generation of the algebra of invariants, and their related syzygies have been given by Hubert (\cite{HubertAC, HubertAD}). Syzygies given in the form of Equation (\ref{syzformHdef}), needed for our calculations which follow,
 will hold for a wide class of group actions and their moving frames.

\subsection{The Adjoint Action and the Killing Form}\label{adjointaction}
In this section we briefly review the Adjoint action and the Killing form for a Lie group. The calculations we show will be needed in Section \ref{ultimo}.

Suppose the Lie group $G$ acts on the smooth space $M$ with local coordinates $(z_1, \dots, z_n)$. We denote by $\mathcal{X}(M)$ the space of vector fields on $M$. By an abuse of notation, for any $g\in G$ we denote the smooth map $z\mapsto g\cdot z$ also by $g:M\rightarrow M$.

\begin{definition}
The action $Ad$ of $G$ on $\mathcal{X}(M)$ is 
\begin{equation}\label{Addef}
(g,\mathbf{v})\mapsto Ad_g(\mathbf{v}),\qquad Ad_g(\mathbf{v})(z)=Tg^{-1}\mathbf{v}(g\cdot z),
\end{equation}
where $Tg: TM\rightarrow TM$ is the tangent map of $g: M\rightarrow M$.
\end{definition}

In coordinates, if
$$\mathbf{v}=\sum_jf_j(z)\frac{\partial}{\partial z_j}= \mathbf{f}^T\boldsymbol{\nabla},$$ then
\begin{displaymath}
Ad_g(\mathbf{v})
 =\displaystyle{\left(\left(\frac{\partial\widetilde{z}}{\partial z}\right)^{-1}\mathbf{f}(\widetilde{z})\right)^T}\boldsymbol{\nabla},
\end{displaymath}
where $(\partial\widetilde{z}/\partial z)$ is the Jacobian of the map $z\mapsto g\cdot z=\widetilde{z}$.

It can be seen that $Ad_g$ is a linear map on $\mathcal{X}(M)$, and further that
$Ad_g\circ Ad_h=Ad_{gh}$ by the chain rule.

Given a smooth group action of $G$ on $M$ where $\dim G=r$, there is an $r$ dimensional vector subspace $\mathcal{X}_G(M)\subset \mathcal{X}(M)$, the so-called infinitesimal vector fields of the group action, which is a representation of the Lie algebra $\mathfrak{g}$ of G, obtained as follows. We take the view that $\mathfrak{g}$ is the {\em tangent space\/} $T_eG$ of $G$ at its identity element $e$, and that this space is modelled by smooth paths $\gamma:[-\epsilon,\epsilon]\rightarrow G$, $\epsilon >0$, such that $\gamma(0)=e$, where paths are considered to be equivalent if their derivatives at $e$ are equal (see for example \cite{Hirsch}). Such a path generates a smooth path through every element $z\in M$ given by $t\mapsto\gamma(t)\cdot z$, and then the derivative of each path at $t=0$ yields a vector field on $M$. The set of such vector fields is $\mathcal{X}_G(M)$. A standard result is that this is a linear space and that a basis of $T_eG$ yields a basis of $\mathcal{X}_G(M)$. It can be shown from the definition of $Ad$, Equation (\ref{Addef}), that $Ad_g:\mathcal{X}_G(M)\rightarrow \mathcal{X}_G(M)$ by noting that the map $\gamma\mapsto g^{-1}\gamma g$ takes $T_eG$ to itself. 
 
 \begin{example}\label{SL2cont}
Consider the $SL(2)$ action as in Example \ref{SL2}. Paths at the identity of $G=SL(2)$ yield  paths
$$t\mapsto \left(x, \frac{a(t)u+b(t)}{c(t)u + \left(1+b(t)c(t)\right)/a(t)}\right)$$
where $a(0)=1$, $b(0)=c(0)=0$ and where $a'(0)=\alpha$, $b'(0)=\beta$, and $c'(0)=\gamma$ are independent constants. Differentiating at $t=0$ yields the three dimensional vector space of infinitesimal vector fields,  with basis

\begin{equation}\label{Xsl2basis}
\mathbf{v}_1= 2u\partial_u,\qquad \mathbf{v}_2= \partial_u,\qquad \mathbf{v}_3= -u^2\partial_u
\end{equation}

and generic element

\begin{equation}\label{sl2genericvf}\mathbf{v}=(\alpha(2u)+\beta+\gamma(-u^2))\partial_u.\end{equation}

Extending the action to the $(x,u,u_x,u_{xx},\dots)$-space via the chain rule leads to the ``prolongation" of vector fields, for example $\mathbf{v_3}$ prolongs to (\cite{Olver}, \S2.3)
$$\mathbf{v_3} = -u^2\partial_u -2uu_x\partial_{u_x} -(2u_x^2 + 2 uu_{xx})\partial_{u_{xx}} -\cdots$$
\end{example}

Given a basis $\mathbf{v}_i$ of $\mathcal{X}_G(M)$, $i=1,...,r$, 
$$Ad_g\left(\sum_i\alpha_i\mathbf{v}_i\right)=\sum_i\alpha_iAd_g(\mathbf{v}_i)=\sum_{i,\,j}\alpha_i\mathcal{A}d(g)^i_j\mathbf{v}_j,$$
for some $r\times r$ matrix $\mathcal{A}d(g)$. In practice, it can be easier to calculate the induced action on the coefficients $\alpha_i$, 
$$\sum_{i,\,j}\alpha_i\mathcal{A}d(g)^i_j\mathbf{v}_j=\sum_i \widetilde{\alpha_i}\mathbf{v}_i,$$
so that writing $\boldsymbol{\alpha}$ as a column vector, $\widetilde{\boldsymbol{\alpha}}=\mathcal{A}d(g)^T\boldsymbol{\alpha}$. If a basis of infinitesimal vector fields $\{\mathbf{v}_i\}$ is given on a space with coordinates $(z_1, z_2, \dots, z_n)$, with $\mathbf{v}_i=\sum\zeta^i_r\partial_{z_r}$, we define the matrix of infinitesimals $\Omega^\alpha(z)$ to be

\begin{equation}\label{infmatgen}
\Omega^\alpha(z) = (\Omega^\alpha_{ir})=(\zeta^i_r).\end{equation} 

In terms of the matrix of infinitesimals, the matrix $\mathcal{A}d(g)$ satisfies

\begin{equation}\label{AdactMxform}
{\cal A}d(g)\Omega(z)=\Omega(\widetilde{z})\left(\frac{\partial \widetilde{z}}{\partial z}
\right)^{-T}.
\end{equation}

In the following example we calculate $\mathcal{A}d(g)$ and verify Equation (\ref{AdactMxform}). 

\vspace{0.5cm}
{\bf Example \ref{SL2cont} (cont.)}\ 
\emph{To find $\mathcal{A}d(g)$, we calculate the Adjoint action of $g\in SL(2)$ on the generic infinitesimal vector field given in (\ref{sl2genericvf}). We obtain
$$\begin{array}{rcl}
Ad_g(\mathbf{v})(z) & = & \displaystyle{(\alpha(2\widetilde{u})+\beta+\gamma(-\widetilde{u}^2))\frac{\partial}{\partial \widetilde{u}}}\\[10pt]
& = & \displaystyle{(\widetilde{\alpha}(2u)+\widetilde{\beta}+\widetilde{\gamma}(-u^2))\frac{\partial}{\partial u}},
\end{array}$$
so that
\begin{equation}\label{Admatrixrep}\begin{pmatrix}
\widetilde{\alpha}\\
\widetilde{\beta}\\
\widetilde{\gamma}\end{pmatrix} = \mathcal{A}d(g)^T\begin{pmatrix}
\alpha\\
\beta\\
\gamma\end{pmatrix} = 
\begin{pmatrix}
 ad+bc & cd & -ab\\  
 2bd & d^2 & -b^2\\  
 -2ac & -c^2 & a^2 \end{pmatrix} 
\begin{pmatrix}
\alpha\\
\beta\\
\gamma
\end{pmatrix}.\end{equation}
On $(u,u_x)$-space, the matrix of infinitesimals is
$$\Omega^u(z)=\bordermatrix{ & u & u_x\cr a& 2u & 2 u_x \cr b& 1&0\cr c& -u^2 & -2uu_x}$$
and with $z=(u,u_x)$ we have 
$$\frac{\mbox{D}\widetilde{z}}{\mbox{D}z}=
\begin{pmatrix} \displaystyle{\frac{\partial \widetilde{u}}{\partial u}} & \displaystyle{\frac{\partial \widetilde{u}}{\partial u_x}}\\[10pt]
\displaystyle{\frac{\partial \widetilde{u_x}}{\partial u}} & \displaystyle{\frac{\partial \widetilde{u_x}}{\partial u_x}}\end{pmatrix}
=\begin{pmatrix} \displaystyle{\frac1{(cu+d)^2}} & 0 \\[10pt] \displaystyle{\frac{-2c u_x}{(cu+d)^3}} & \displaystyle{\frac1{(cu+d)^2}}\end{pmatrix}.$$
Equation (\ref{AdactMxform}) is easily verified.}

\begin{remark} \emph{There are several reasons  for considering the Adjoint action of a Lie group $G$, not on its matrix Lie algebra but on the  representation of  the Lie algebra, $\mathcal{X}_G(M)$. To begin with, Lie symmetries of variational problems are found using symbolic software which return the vector field representation of the Lie algebra; it is the flows of these fields that generate $G$ so that the (local) group action is found by integrating the infinitesimal vector fields. Even more importantly, it is the infinitesimal vector fields that appear in the derivation of the formulae for Noether's Theorem.}
\end{remark}

If $\mathbf{v}\in \mathcal{X}_G(M)$, then there is a linear map, called the adjoint map of $\mathbf{v} $,
$$\mbox{ad}_{\mathbf{v}}:\mathcal{X}_G(M)\rightarrow
\mathcal{X}_G(M),\qquad \mbox{ad}_{\mathbf{v}}(\mathbf{w}) = [\mathbf{v},\mathbf{w}]$$
where $[\, , \, ]$ is the standard bracket of vector fields. A standard calculation yields
$$ \mbox{ad}_{{A}d_g(\mathbf{v})} = {A}d_g \circ \mbox{ad}_{\mathbf{v}} \circ {A}d_g^{-1}.$$
If one takes a basis $  \mathbf{v}_1$, \dots, $ \mathbf{v}_r$ of $\mathcal{X}_G(M)$, where $r=\mbox{dim}(G)$, then an $r\times r$ matrix representation of $\mbox{ad}_{\mathbf{v}}$ can be obtained to which we give the same name. The bilinear \emph{Killing form\/} $B$ on $\mathcal{X}_G(M) $ is defined to be
$$B(\mathbf{v}, \mathbf{w})=\mbox{trace}\left(\mbox{ad}_{\mathbf{v}}\mbox{ad}_{\mathbf{v}}\right)$$
and this form is then overtly $\mathcal{A}d_g$ invariant. In terms of the matrix
$$\mathbf{B}=(B_{ij}),\qquad B_{ij}=B(\mathbf{v}_i, \mathbf{v}_j),$$ the $\mathcal{A}d_g$ invariance takes the form

\begin{equation}\label{BAdinveq} \mathcal{A}d_g\, \mathbf{B}\, \mathcal{A}d_g^{\,T}=\mathbf{B}.\end{equation}

\vspace{0.5cm}
{\bf Example \ref{SL2cont} (cont.)}
 \emph{We continue with the projective $SL(2)$ action described above. The basis of $\mathcal{X}_{SL(2)}(M)$ is given in (\ref{Xsl2basis}) and setting $\mathbf{v}=\alpha\mathbf{v}_1+  \beta\mathbf{v}_2+   \gamma\mathbf{v}_3$ we have relative to that basis that
$$\mathrm{ad}_{\mathbf{v}} =\begin{pmatrix} 0& 2\beta & -2\gamma \\ \gamma & -2\alpha & 0\\ -\beta & 0 & 2\alpha\end{pmatrix}$$
and hence the Killing form is 
\begin{equation}\label{killingmatrix}
\mathbf{B}=\begin{pmatrix}8&0&0\\0&0&4\\0&4&0\end{pmatrix}.
\end{equation}
The ${A}d$-invariance of $\mathbf{B}$ using $\mathcal{A}d(g)$ from Equation (\ref{Admatrixrep}) is easily verified.}

\subsection{The Invariant Calculus of Variations}\label{eulerlagrange}
We assume the independent variables are $\mathbf{x}=(x_1, \dots, x_p)$, the dependent variables are $\mathbf{u}=(u^1, u^2, \dots, u^q)$, and that the Lagrangian is a smooth function of  $\mathbf{x}$, $\mathbf{u}$ and finitely many derivatives of the $u^\alpha$; such a Lagrangian is denoted as $\mathscr{L}[\mathbf{u}]=\int L[\mathbf{u}] \,\mathrm{d}\mathbf{x}$.

Suppose we have a Lagrangian that is invariant under some smooth action of a Lie group $G$. Let $\kappa_j$, $j=1,\dots, N$ be the generating differential invariants of the group action. By the Replacement Theorem, we may assume the $\kappa_j$ are in fact some set of the $I^{\alpha}_K$ and their derivatives with respect to the independent variables, and with respect to a frame $\rho$, on some open domain in the $(x_i, u^{\alpha}, u^{\alpha}_K)$-space. We suppose that the action leaves the $x_i$ invariant, so that the variational problem can be written as $\int\,L[\boldsymbol{\kappa}]\,{\rm d}\mathbf{x}$. This can always be achieved by reparametrisation, and setting the original independent variables to be dependent on the new invariant parameters. Note that if a parameter is assumed to be an arc length then the relevant constraint needs to be inserted with a Lagrange multiplier. Reparametrisation has the additional advantage that the resulting Euler-Lagrange system is a differential system with respect to standard commuting differential operators, so that standard solution methods can be applied.

The Euler-Lagrange equations for such a problem have $G$ as a Lie symmetry, so that they can be expressed as differential equations for the $\kappa_j$ (there can also be trivial non-invariant multipliers which do not affect the solution space and can be discarded). Kogan and Olver \cite{KoganOlver} constructed a variational tricomplex to show how to derive the Euler-Lagrange equations directly in terms of the invariants, bypassing the need to use the standard formulae and then invariantising. Here we show how the invariantised Euler-Lagrange equations can be obtained using calculations  which are close in style to those used to obtain them in the underlying $(\mathbf{x}, \mathbf{u})$ variables. The syzygies discussed in Section \ref{moving} play a central role.
 
If $\mathbf{x}\mapsto (\mathbf{x},\mathbf{u}(\mathbf{x}))$ extremises the functional $\mathscr{L}[\mathbf{u}]$, then for a small perturbation of $\mathbf{u}$ we obtain
\begin{equation}\label{Elderivnoninv}\begin{array}{rcl}
0 & = & \displaystyle{\left.\frac{\mathrm{d}}{\mathrm{d}\varepsilon}\right|_{\varepsilon=0}
\mathscr{L}[\mathbf{u}+\varepsilon \mathbf{v}]}\\[10pt]
& = & \displaystyle{\int \sum_{\alpha=1}^q\left[  \mathsf{E}^{\alpha}(L)v^\alpha+\sum_i\frac{\mbox{D}}{\mbox{D} x_i}\left(\frac{\partial L}{\partial u^\alpha_i}v^\alpha+\cdots\right)\right]}\mathrm{d}
\mathbf{x},
\end{array}\end{equation}
after differentiation under the integral sign and integration by parts, where $\mbox{D}/\mbox{D}x_i$ is the total derivative operator with respect to $x_i$, and
where 
$$\displaystyle{\mathsf{E}^{\alpha}(L)=\sum_K(-1)^{|K|}\frac{\partial^{|K|}}{\partial x_1^{k_1}\cdots \partial x_p^{k_p}}\,\frac{\partial L}{\partial u^\alpha_K},}$$
is the Euler operator with respect to the dependent variable $u^{\alpha}$ acting on $L$. The boundary terms play an important role in the determination of the natural boundary conditions, and also the formulae for Noether's Theorem, in the event the perturbation is given by the group action. We note that the boundary terms are linear in the $v^{\alpha}$ and their derivatives.

In order to obtain the invariantised analogue of ${\left.\frac{\mathrm{d}}{\mathrm{d}\varepsilon}\right|_{\varepsilon=0}\mathscr{L}[\mathbf{u}+\varepsilon \mathbf{v}]}$, where the Lagrangian is given in terms of differential invariants, we first introduce a dummy invariant independent variable, $\tau$. Since both $\tau$ and the $x_i$ are invariant, by construction and hypothesis respectively, we have for all $i$ that
\begin{equation}\nonumber
\displaystyle{\mathcal{D}_\tau=\frac{\mathrm{D}}{\mathrm{D}\tau},\qquad \mathcal{D}_i=\frac{\mathrm{D}}{\mathrm{D} x_i}, \qquad\mathrm{and}\qquad [\mathcal{D}_\tau,\mathcal{D}_i]=0}.
\end{equation}
Furthermore, symbolically,
$$\displaystyle{\left.\frac{\mathrm{d}}{\mathrm{d}\varepsilon}\right|_{\varepsilon=0}\mathscr{L}[\mathbf{u}+\varepsilon \mathbf{v}]=\left.\frac{\mathrm{D}}{\mathrm{D} \tau}\right|_{\mathbf{u}_{\tau}=\mathbf{v}}\mathscr{L}[\mathbf{u}]}.$$
We assume that $L=L[\boldsymbol{\kappa}]$, where $\kappa_j=I(u^{\alpha_j}_{K_j})$, that is, the invariants used to express $L$ are symbolic invariants obtained via a moving frame, as in Equation (\ref{notainvariant}). It turns out it is not necessary to append the syzygies between the $\kappa_j$ as constraints  to $L$;  since we are obtaining the Euler-Lagrange equations with respect to the variables $\mathbf{u}$ and in terms of those variables, the syzygies are identically zero, and hence the syzygies do not contribute. To demonstrate this, in Example \ref{sl2surfWITHLags} we keep the additional syzygy as a constraint with Lagrange multiplier and show that this term disappears in the final result. The introduction of a new independent variable results in $q$ new invariants, $I^{\alpha}_{\tau}= g\cdot u^{\alpha}_{\tau}\big\vert_{g=\rho}$ (as in Equation \ref{notainvariant}), for $\alpha=1, \dots, q$, and a set of syzygies $\mathcal{D}_{\tau} \boldsymbol{\kappa} = \mathcal{H} I(\mathbf{u}_{\tau})$ that is,
\begin{equation}\label{syzformHdef}
\mathcal{D}_{\tau}\left(\begin{array}{c} \kappa_1\\ \kappa_2\\ \vdots \\ \kappa_N\end{array}\right)
=\mathcal{H}\left(\begin{array}{c} I_{\tau}^1\\ I_{\tau}^2\\ \vdots \\ I_{\tau}^q\end{array}\right),
\end{equation}
where $\mathcal{H}$ is a $N\times q$ matrix of operators depending only on the $\mathcal{D}_i$, the $\kappa_j$ and their invariant derivatives.

Mirroring the calculation of $E^{\alpha}(L)$, we have
$$\begin{array}{rcl}
0&=&\displaystyle{\frac{\partial}{\partial \tau}\int L[\boldsymbol{\kappa}]\mathrm{d}\mathbf{x}}\\[10pt]
&=&\displaystyle{\int \left[\sum_{j,K} \frac{\partial L}{\partial \mathcal{D}_K\kappa_j}\mathcal{D}_K\mathcal{D}_{\tau}\kappa_j\right]\mathrm{d}\mathbf{x}}\\[10pt]
&=&\displaystyle{\int \left[\sum_{j,K} (-1)^{|K|}\mathcal{D}_K\frac{\partial L}{\partial \mathcal{D}_K\kappa_j}\mathcal{D}_{\tau}\kappa_j\right]\mathrm{d}\mathbf{x}}+\mbox{B.T.'s}\\[10pt]
&=&\displaystyle{\int \sum_{j,\alpha} \left[ \mathsf{E}^{j}(L)\mathcal{H}_{j,\alpha}I^{\alpha}_{\tau}\right]\mathrm{d}\mathbf{x}}+\mbox{B.T.'s}\\[10pt]
&=&\displaystyle{\int \left[\sum_{j,\alpha}\mathcal{H}^{\ast}_{j,\alpha} \mathsf{E}^{j}(L))I_{\tau}^{\alpha}+\sum_i\frac{\mathrm{D}}{\mathrm{D} x_i}\left(\sum_{J,\alpha}I^{\alpha}_{\tau J}C^{\alpha}_{i,J}\right)\right]\mathrm{d}\mathbf{x}},
\end{array}$$

where ``B.T.'s'' stands for boundary terms, $\mathsf{E}^j$ is the Euler operator corresponding to variations in $\kappa_j$, $\mathcal{H}^\ast_{j,\alpha}$ is the adjoint of $\mathcal{H}_{j,\alpha}$ and $C^{\alpha}_{i,J}$ is the coefficient of $I^\alpha_{\tau J}$ under the $\frac{\mathrm{D}}{\mathrm{D}x_i}$ operator in the B.T.'s. In principle one could add a null divergence to the boundary terms without changing the above equation. However, if one uses standard integration by parts procedures in the calculation of the B.T.'s, one obtains an expression that is linear in the $I^\alpha_{\tau J}$ and from this point of view the $C^\alpha_{i,J}$ are well-defined. By the definition of $I^{\alpha}_{\tau}$ we know that $I^{\alpha}_{\tau}$ contains $u^{\alpha}_\tau$. Thus, from the Fundamental Lemma of Calculus of Variations, the coefficient of $I^\alpha_{\tau}$ must be zero, that is, $\mathsf{E}^{\alpha}(L)=\sum_j \mathcal{H}^*_{j,\alpha} \mathsf{E}^j(L)$ or in matrix form,
\begin{equation}\label{invELeq} 
\mathsf{E}^{\mathbf{u}}(L) = \mathcal{H}^{\ast} \mathsf{E}^{\boldsymbol{\kappa}}(L)
\end{equation}
where $(\mathcal{H}^{\ast})_{a,b}=(\mathcal{H}_{b,a})^{\ast}$. Equation (\ref{invELeq}) represents the invariantised Euler-Lagrange equations. 

This derivation of the invariantised analogue of $\left.\frac{\mathrm{d}}{\mathrm{d}\varepsilon}\right|_{\varepsilon=0}\mathscr{L}[\mathbf{u}+\varepsilon\mathbf{v}]$ can be found in \S 7 of \cite{Mansfield}.
 
\vspace{0.5cm}
 {\bf Example \ref{exone} (cont.)} \emph{We continue with the study of curves in the plane under the projective action of $SL(2)$, with $\sigma=\{u;x\}$, the Schwarzian derivative. Suppose we have the variational problem $\int L[\sigma]\, {\rm d}x$. Introducing the dummy variable $\tau$, with $u=u(x,\tau)$ to effect the variation and recalling that we have 
$$I^u_{\tau}=\frac{u_{\tau}}{u_x},\qquad \mathcal{D}_{\tau}\sigma=\mathcal{H}I^u_{\tau},$$
 where $\mathcal{H}=\mathcal{D}_x^3 + 2\sigma\mathcal{D}_x+\sigma_x$, then the Euler-Lagrange equation of $L$ with respect to $u$ is
 $$\mathsf{E}^u(L)=\mathcal{H}^{\ast}\mathsf{E}^{\sigma}(L).$$
 In this case, $\mathcal{H}^{\ast}=-\mathcal{H}$, so that, for example, if $L[\sigma]=\textstyle\frac12\sigma_x^2$, then $$\mathsf{E}^u(L)=-\left(\mathcal{D}_x^3 + 2\sigma\mathcal{D}_x+\sigma_x\right)\left(-\sigma_{xx}\right),$$ which can be verified directly.}
 
 \begin{example}\label{sl2surfWITHLags} We now consider the projective action of $SL(2)$ on surfaces, $u=u(x,t)$, that is
 $$\widetilde{x}=x,\qquad \widetilde{t}=t,\qquad \widetilde{u}=\frac{au+b}{cu+d},\qquad ad-bc=1.$$
 Take the normalising equations for the frame to be $\widetilde{u}=0$, $\widetilde{u_x}=1$ and $\widetilde{u_{xx}}=0$ as before. The generating invariants are then, as above, $\sigma=\{u;x\}=I^u_{111}$ and $\kappa=u_t/u_x=I^u_2$ and their syzygy is $\mathcal{D}_t\sigma=\left(\mathcal{D}_x^3 + 2\sigma\mathcal{D}_x+\sigma_x\right)\kappa$ as above. Suppose we have a variational problem $\int L[\sigma,\kappa]\,{\rm d}x{\rm d}t$. Introduce the dummy variable $\tau$, and set  $u=u(x,t,\tau)$ to effect the variation. We now have a new invariant, $I^u_{\tau}=u_{\tau}/u_x$ (by the same calculation that yields $I^u_2=\kappa$), and new syzygies are calculated via the method of Equation (\ref{SYZ}), yielding
$$\mathcal{D}_{\tau}\left(\begin{array}{c}\sigma\\ \kappa\end{array}\right)=\left(\begin{array}{c} \mathcal{H}_1 \\ \mathcal{H}_2 \end{array}\right) I^u_{\tau}$$
where $\mathcal{H}_1=\mathcal{H}$, and $\mathcal{H}_2=\mathcal{D}_t -\kappa\mathcal{D}_x + \mathcal{D}_x(\kappa)$. We introduce the syzygy between $\sigma$ and $\kappa$  as a constraint into the Lagrangian with a Lagrange multiplier $\lambda(x,t)$ in order to show what happens. It turns out that $\lambda$ does not appear in the final result; this is expected since we are obtaining the Euler-Lagrange equation with respect to the variable $u$ and in terms of that variable, the syzygy is identically zero. The calculation of the Euler-Lagrange equation of $L$ with respect to $u$ is calculated as follows,
\begin{align*}
 &\mathcal{D}_{\tau}\int [L[\sigma,\kappa]+\lambda\left(\mathcal{D}_t\sigma-\mathcal{H}\kappa\right)]\,{\rm d}x{\rm d}t\\  
& = \int\left[\left(\sum_{K}\frac{\partial L}{\partial \mathcal{D}_K\sigma}\mathcal{D}_K\right)\mathcal{D}_{\tau}\sigma  +  \left(\sum_{K}\frac{\partial L}{\partial \mathcal{D}_K\kappa}\mathcal{D}_K\right)\mathcal{D}_{\tau}\kappa +\lambda\mathcal{D}_{\tau}\left(\mathcal{D}_t\sigma-\mathcal{H}\kappa\right)\right]\, {\rm d}x{\rm d}t \\
 &= \int \left[\left( \mathsf{E}^{\sigma}(L) -\lambda_t -\lambda\kappa_x +\lambda_x\kappa\right)\mathcal{D}_{\tau}\sigma +\left( E^{\kappa}(L) +\lambda\sigma_x+2\lambda_x\sigma+\lambda_{xxx}\right)\mathcal{D}_{\tau}\kappa \right] {\rm d}x{\rm d}t \\
 &= \int \left[\left( \mathsf{E}^{\sigma}(L) -\lambda_t -\lambda\kappa_x +\lambda_x\kappa\right)\mathcal{H}_{1}I^u_{\tau} +\left( \mathsf{E}^{\kappa}(L) +\lambda\sigma_x+2\lambda_x\sigma+\lambda_{xxx}\right)\mathcal{H}_{2}I^u_{\tau} \right]\, {\rm d}x{\rm d}t \\
 &=\int \left[\mathcal{H}_1^{\ast}\left(\mathsf{E}^{\sigma}(L) -\lambda_t -\lambda\kappa_x +\lambda_x\kappa\right) +\mathcal{H}_2^{\ast}\left(E^{\kappa}(L) +\lambda\sigma_x+2\lambda_x\sigma+\lambda_{xxx}\right)\right]I^u_{\tau} \,{\rm d}x{\rm d}t. 
 \end{align*}
 Suppressing the boundary terms, we obtain 
 $$\mathsf{E}^u(L)=\mathcal{H}_1^{\ast}\left(\mathsf{E}^{\sigma}(L) -\lambda_t -\lambda\kappa_x +\lambda_x\kappa\right) +\mathcal{H}_2^{\ast}\left(\mathsf{E}^{\kappa}(L) +\lambda\sigma_x+2\lambda_x\sigma+\lambda_{xxx}\right).$$
 In fact, the terms involving $\lambda$ greatly simplify, to be $2\lambda_x(\mathcal{H}\kappa-\sigma_t) +\lambda(\mathcal{H}\kappa-\sigma_t)_x$ which is identically zero by virtue of the syzygy. Hence we obtain finally
$$\mathsf{E}^u(L)=\mathcal{H}_1^{\ast}\mathsf{E}^{\sigma}(L)+\mathcal{H}_2^{\ast} \mathsf{E}^{\kappa}(L)=-\left(\mathcal{D}_x^3 + 2\sigma\mathcal{D}_x+\sigma_x\right)\mathsf{E}^{\sigma}(L) + \left(-\mathcal{D}_t +\kappa\mathcal{D}_x+2\kappa_x\right)\mathsf{E}^{\kappa}(L).$$
 \end{example}
  
\section{Structure of Noether's Conservation Laws}\label{novoteorema}
Consider Equation (\ref{Elderivnoninv}) where the variation comes from a group action $u^{\alpha}\mapsto g\cdot u^{\alpha}$ and which leaves the independent variables invariant. For any path $g(t)\subset G$ with $g(0)=e$ we will have $v^{\alpha}=\mathrm{d}/\mathrm{d}t \big\vert_{t=0} g(t)\cdot u^{\alpha}$, that is, the $u^\alpha$ component of an infinitesimal vector field. In this case, we have by the invariance of $L$ that
$$0=\sum_{\alpha} v^{\alpha} E^{\alpha}(L) + \sum_i \frac{\mathrm{D}}{\mathrm{D}x_i} P_i$$
so that 
$$\sum_i \frac{\mathrm{D}}{\mathrm{D}x_i} P_i = 0$$
on solutions of the Euler-Lagrange system. These conservation laws could well be identically zero, but we do not address this case in this paper. This result is Noether's Theorem and we will obtain a conservation law for every infinitesimal vector field. It can be seen since the expressions for the $P_i$ are linear in the $v^{\alpha}$ and their derivatives that we need consider only a basis of infinitesimal vector fields. The formulae for the $P_i$ are well known \cite{Olver}. In the one dimensional problem, we obtain $r=\mbox{dim}(G)$ first integrals of the Euler-Lagrange equations.

\vspace{0.5cm}
{\bf Example \ref{exone} (cont.)}\emph{
Consider the $SL(2)$ group action as in Example \ref{SL2} and consider the Lagrangian
$$L(\sigma,\sigma_x,\sigma_{xx},...)\mathrm{d}x$$
where $$\sigma=\displaystyle{\frac{u_{xxx}}{u_x}-\frac{3}{2}\frac{u_{xx}^2}{u_x^2}=I^u_{111}.}$$
The group $SL(2)$ is a three parameter group and so there are three conservation laws. Calculating these according to the known formulae, and writing third order and higher derivatives of $u$ in terms of $\sigma$, these three laws are
$$\underbrace{\left(\begin{array}{ccc}
\displaystyle{1-\frac{uu_{xx}}{u_x^2}} &\kern8pt \displaystyle{\frac{2u}{u_x}} &\kern8pt \displaystyle{\frac{u_{xx}}{u_x}-\frac{uu_{xx}^2}{2u_x^3}}\\[10pt]
\displaystyle{-\frac{u_{xx}}{2u_x^2}} &\kern8pt \displaystyle{\frac{1}{u_x}} &\kern8pt \displaystyle{-\frac{u_{xx}^2}{4u_x^3}}\\[10pt]
\displaystyle{-u+\frac{u^2u_{xx}}{2u_x^2}} &\kern8pt \displaystyle{-\frac{u^2}{u_x}} &\kern8pt \displaystyle{u_x-\frac{uu_{xx}}{u_x}+\frac{u^2u_{xx}^2}{4u_x^3}}
\end{array}\right)}_{\displaystyle{\mathcal{A}d(\rho)^{-1}}}\underbrace{\left(\begin{array}{c}
\displaystyle{-2\frac{\mathrm{d}}{\mathrm{d}x}\mathsf{E}^\sigma(L)}\\[10pt]
\displaystyle{\sigma\mathsf{E}^\sigma(L)+\frac{\mathrm{d}^2}{\mathrm{d}x^2}\mathsf{E}^\sigma(L)}\\[10pt]
-2\mathsf{E}^\sigma(L)
\end{array}\right)}_{\displaystyle{\boldsymbol{\upsilon}(I)}}=\left(\begin{array}{c}
c_1\\[10pt]
c_2\\[10pt]
c_3
\end{array}\right)$$
where the matrix on the left equals $\mathcal{A}d(\rho)^{-1}$, the inverse of the representation of the $SL(2)$ Adjoint action on the vector fields
obtained in Equation (\ref{Admatrixrep}) and evaluated on the moving frame obtained in Equation (\ref{ProjSL2frame}), and $\boldsymbol{\upsilon}(I)$ is a vector of invariants.}

\vspace{0.5cm}
The following theorem generalises the result appearing in \cite{Mansfield}.

\begin{theorem}\label{tmel}
Let $\int L(\kappa_1,\kappa_2,...)\mathrm{d}\mathbf{x}$ be invariant under $G\times M\rightarrow M$, where $M=J^n(X\times U)$, with generating invariants $\kappa_j$, for $j=1,...,N$, and let $g\cdot x_i=x_i$, for $i=1,...,p$. Introduce a dummy variable $\tau$ to effect the variation and then integration by parts yields
$$\displaystyle{\frac{\partial}{\partial \tau}\int L(\kappa_1,\kappa_2,...)\mathrm{d}\mathbf{x}=\int\Big[\sum_\alpha\mathsf{E}^\alpha(L)I_\tau^\alpha+\mathsf{Div}(P)\Big]\mathrm{d}\mathbf{x}},$$
where this defines the $p$-tuple $P$, whose components are of the form
$$P_i=\displaystyle{\sum_{\alpha,J} I^\alpha_{\tau J}C^\alpha_{i,J},\qquad i=1,...,p,}$$
and the vectors $\mathcal{C}^\alpha_{i}=(C^\alpha_{i,J})$. Recall that $I^\alpha_{\tau J}=I(u^\alpha_{\tau J})$, where $J$ is an index with respect to the independent variables $x_i$, for $i=1,...,p$. Let $(a_1,...,a_r)$ be coordinates of $G$ near the identity $e$, and $\mathbf{v}_i$, for $i=1,...,r$, the associated infinitesimal vector fields. Furthermore, let $\mathcal{A}d(g)$ be the Adjoint representation of $G$ with respect to these vector fields. For each dependent variable, define the matrix of infinitesimals to be
$$\Omega^\alpha(\widetilde{z})=\left(\widetilde{\zeta^i_j}\right),$$
where 
$$\zeta^i_j=\left.\frac{\partial \widetilde{z_i}}{\partial a_j}\right|_{g=e}$$
are the infinitesimals of the prolonged group action. Let $\Omega^\alpha(I)$, for $\alpha=1,...,q$ be the invariantised version of the above matrices. Then the $r$ conservation laws obtained via Noether's Theorem can be written in the form
$$\sum_i\frac{D}{Dx_i}\mathcal{A}d(\rho)^{-1}\boldsymbol{\upsilon}_i(I)=0,$$
where 
$$\boldsymbol{\upsilon}_i(I)=\sum_\alpha\Omega^\alpha(I)\mathcal{C}^\alpha_i.$$
\end{theorem}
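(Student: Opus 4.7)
My approach combines the invariantised form of Noether's identity from Section~\ref{eulerlagrange} with the Adjoint covariance of Equation~(\ref{AdactMxform}).

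First, since $g\cdot x_i = x_i$, the basis infinitesimal vector fields are vertical, $\mathbf{v}_k=\sum_\alpha\zeta^k_\alpha\,\partial_{u^\alpha}$, with prolonged components $D_J\zeta^k_\alpha$. The standard Noether formula gives the $k$th current as $P_i^{(k)}(z)=\sum_{\alpha,J}D_J\zeta^k_\alpha\,\hat{C}^\alpha_{i,J}(z)$, where the coefficients $\hat{C}^\alpha_{i,J}$ agree with the invariants $C^\alpha_{i,J}$ of Section~\ref{eulerlagrange} upon restriction to invariantised coordinates (both decompositions express the same boundary term $P_i$). Stacking over $k=1,\dots,r$ gives $\mathbf{P}_i(z) = \sum_\alpha\Omega^\alpha(z)\,\hat{\mathcal{C}}^\alpha_i(z)$, and by invariance of $L$ and the Noether identity, $\sum_i D_i\mathbf{P}_i = \mathbf{0}$ on solutions of the Euler--Lagrange system.

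Second, I establish the Adjoint covariance
$$\mathbf{P}_i(z) = \mathcal{A}d(g)^{-1}\,\mathbf{P}_i(\widetilde z).$$
This is forced by the $\mathbb{R}$-linearity of the Noether map $\mathbf v\mapsto P_i^{\mathbf v}$ together with the expression of the Adjoint action in the chosen basis: translating the base point by $g$ is equivalent, under the $G$-equivariance of the Noether construction, to replacing the vector field $\mathbf v_k$ by $\sum_j\mathcal{A}d(g)^k_j\mathbf v_j$. Evaluating at $g=\rho(z)$, where $\widetilde z|_{g=\rho}=I(z)$, yields $\mathbf{P}_i(z) = \mathcal{A}d(\rho)^{-1}\mathbf{P}_i(I)$, and I set $\boldsymbol{\upsilon}_i(I):=\mathbf{P}_i(I)$; by construction this is a column of invariants.

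Third, I identify $\boldsymbol{\upsilon}_i(I)$ as $\sum_\alpha \Omega^\alpha(I)\mathcal{C}^\alpha_i$. Evaluating $\mathbf{P}_i(z)=\sum_\alpha\Omega^\alpha(z)\hat{\mathcal{C}}^\alpha_i(z)$ at invariantised coordinates replaces $D_J\zeta^k_\alpha$ with the entries of $\Omega^\alpha(I)$, while the coefficients $\hat{C}^\alpha_{i,J}$ evaluated at $I(z)$ coincide with the invariants $C^\alpha_{i,J}$. Combining the three steps delivers
$$\sum_i D_i\big[\mathcal{A}d(\rho)^{-1}\boldsymbol{\upsilon}_i(I)\big] = \mathbf{0},$$
as claimed.

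The main obstacle is justifying the Adjoint covariance in step two. Rearranging Equation~(\ref{AdactMxform}) gives $\Omega^\alpha(\widetilde z) = \mathcal{A}d(g)\,\Omega^\alpha(z)(\partial\widetilde z/\partial z)^T$, so an extra Jacobian factor appears; showing that this factor cancels upon pairing with $\hat{\mathcal{C}}^\alpha_i$ requires tracking how the non-invariant coefficients transform under the group action. This bookkeeping, across all orders of prolongation and multiple dependent variables, is the technical crux of the proof.
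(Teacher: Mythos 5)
Your plan assembles the right ingredients, but the step you yourself flag as the ``technical crux'' --- the Adjoint covariance $\mathbf{P}_i(z)=\mathcal{A}d(g)^{-1}\mathbf{P}_i(\widetilde z)$ in step two --- is asserted via an appeal to ``the $G$-equivariance of the Noether construction'' rather than proved, and that covariance is essentially the content of the theorem. As you note, rearranging Equation (\ref{AdactMxform}) leaves a Jacobian factor $(\partial\widetilde z/\partial z)^{T}$ whose cancellation would require tracking how the non-invariant coefficients $\hat C^{\alpha}_{i,J}$ of the classical Noether current transform under $G$ to all orders of prolongation; you do not carry out this bookkeeping, so the proof is not complete. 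There is also a smaller unaddressed point in step one: each boundary term is only defined up to a null divergence, so identifying the classical and invariant-calculus coefficients requires fixing both by the same integration-by-parts procedure (the paper makes this choice explicit).

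The paper's proof dissolves the difficulty by never introducing the non-invariant coefficients at all. It starts from the boundary term produced by the \emph{invariant} calculus, $P_i=\sum_{\alpha,J}I^{\alpha}_{\tau J}C^{\alpha}_{i,J}$, in which the $C^{\alpha}_{i,J}$ are already invariants and the only frame-dependent objects are the $I^{\alpha}_{\tau J}$. The chain rule gives
$$\begin{pmatrix}I^{\alpha}_{\tau}&I^{\alpha}_{\tau j_1}&\cdots\end{pmatrix}=\begin{pmatrix}u^{\alpha}_{\tau}&u^{\alpha}_{\tau j_1}&\cdots\end{pmatrix}\left.\frac{\partial\widetilde z}{\partial z}\right|_{g=\rho(z)}^{T},$$
and substituting for $u^{\alpha}_{\tau}$ the infinitesimals $\zeta^{k}_{\alpha}$ of each basis vector field turns the row vector into the $k$-th row of $\Omega^{\alpha}(z)$. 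Stacking over $k=1,\dots,r$ and applying Equation (\ref{AdactMxform}) evaluated at $g=\rho(z)$, namely $\Omega^{\alpha}(z)\left(\partial\widetilde z/\partial z\right)^{T}\big|_{g=\rho}=\mathcal{A}d(\rho)^{-1}\Omega^{\alpha}(I)$, yields the factorisation directly: the Jacobian you were trying to cancel is precisely the factor that converts $\Omega^{\alpha}(z)$ into $\mathcal{A}d(\rho)^{-1}\Omega^{\alpha}(I)$. If you reorganise your argument this way, no equivariance of the currents themselves needs to be established, and your steps one and three become immediate.
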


\begin{proof} We denote $\partial u^\alpha/\partial {x_{j}}$ by $u^\alpha_{j}$. We know that 
$$\displaystyle{\left.\frac{\mathrm{d}}{\mathrm{d}\varepsilon}\right|_{\varepsilon=0}\mathscr{L}[u^\alpha+\varepsilon v^\alpha]\;\mathrm{and}\; \left.\frac{\partial}{\partial \tau}\right|_{u^\alpha_\tau=v^\alpha}\mathscr{L}[u^\alpha]}$$
yield the same symbolic result. Thus, 
$$\left.\frac{\partial}{\partial \tau}\right|_{u^\alpha_\tau=v^\alpha}\mathscr{L}[u^\alpha]=0$$
provides us with the following boundary term
\begin{equation}\label{maisuma}
\sum_i\frac{D}{Dx_i}\sum_\alpha\begin{pmatrix}I^\alpha_{\tau j_1} & I^\alpha_{\tau j_1 j_2} & \cdots\end{pmatrix}\mathcal{C}^\alpha_i=0.
\end{equation}
By definition, $I^\alpha_{\tau J}$ is equal to 
$$I^\alpha_{\tau J}=\widetilde{u^\alpha_{\tau J}}|_{g=\rho(z)}.$$
Hence, by the chain rule, 
\begin{equation}\label{maisduas}
\begin{pmatrix}I^{\alpha}_{\tau}&I^{\alpha}_{\tau j_1}&I^{\alpha}_{\tau j_1j_2}&...\end{pmatrix}=\begin{pmatrix}u^\alpha_\tau&u^\alpha_{\tau j_1}&u^\alpha_{\tau j_1 j_2}&\cdots\end{pmatrix}\left.\frac{\partial (\widetilde{u^\alpha},\widetilde{u^\alpha_{j_1}},\widetilde{u^\alpha_{j_1 j_2}},...)}{\partial (u^\alpha,u^\alpha_{j_1},u^\alpha_{j_1 j_2},...)}\right|_{g=\rho(z)}^T,
\end{equation}
where the indices $j_\kappa$ represent the derivatives with respect to any independent variable except the dummy variable $\tau$. We now set
\begin{equation}\label{maistres}
\displaystyle{\left.\frac{\partial \widetilde{u^\alpha}}{\partial \tau}\right|_{g=e}=u^\alpha_\tau=\phi^\alpha_i=\left.\frac{\partial u^\alpha}{\partial a_i}\right|_{g=e}},
\end{equation}
and we know
\begin{equation}\label{maisquatro}
\mathcal{A}d(\rho)^{-1}\Omega(I)=\Omega(z)\left.\frac{\partial \widetilde{z}}{\partial z}\right|_{g=\rho(z)}^T,
\end{equation}
where $\frac{\partial \widetilde{z}}{\partial z}$ corresponds to $\frac{\partial (\widetilde{u^\alpha},\widetilde{u^\alpha_{j_1}},\widetilde{u^\alpha_{j_1 j_2}},...)}{\partial (u^\alpha,u^\alpha_{j_1},u^\alpha_{j_1 j_2},...)}$ (see Theorem 3.3.10 in \cite{Mansfield}).

Substituting the vector $\begin{pmatrix}I^\alpha_{\tau}&I^\alpha_{\tau j_1}&I^\alpha_{\tau j_1j_2}&...\end{pmatrix}$ in (\ref{maisuma}) by its expression in Equation (\ref{maisduas}) yields
$$\sum_i\frac{D}{Dx_i}\sum_\alpha \begin{pmatrix}u^\alpha_\tau&u^\alpha_{\tau j_1}&u^\alpha_{\tau j_1 j_2}&\cdots\end{pmatrix}\left.\frac{\partial \widetilde{z}}{\partial z}\right|_{g=\rho(z)}^T\mathcal{C}^\alpha_i=0.$$
By (\ref{maistres}), the vector $\begin{pmatrix}u^\alpha_\tau&u^\alpha_{\tau j_1}&u^\alpha_{\tau j_1 j_2}&\cdots\end{pmatrix}$ in the above equation can be substituted by every single row of the matrix of infinitesimals $\Omega^\alpha(z)$, as defined in Equation (\ref{infmatgen}). Hence, for each 
independent group parameter $a_j$ we obtain
\begin{equation}
\sum_i\frac{D}{Dx_i}\sum_\alpha\Omega^\alpha_j(z)\left.\frac{\partial \widetilde{z}}{\partial z}\right|_{g=\rho(z)}^T\mathcal{C}^\alpha_i=0,\qquad j=1,...,r,
\end{equation}
where $\Omega^\alpha_j(z)$ corresponds to row $j$ in $\Omega^\alpha(z)$.

If we have $r$ group parameters describing group elements near the identity of the group, we can 
write the $r$ equations in matrix form as
$$\sum_i\frac{D}{Dx_i}\sum_\alpha\Omega^\alpha(z)\left.\frac{\partial\widetilde{z}}{\partial z}\right|_{g=\rho(z)}^T\mathcal{C}^\alpha_i=0.$$
Finally, using Equation (\ref{maisquatro}), we obtain
$$\sum_i\frac{D}{Dx_i}\mathcal{A}d(\rho)^{-1}\sum_\alpha\Omega^\alpha(I)\mathcal{C}^\alpha_i=0.$$
$\hfill \Box$
\end{proof}

If there is only one independent variable, we obtain a set of $r$ first integrals,
\begin{equation}\label{NTfirstintinv} {\bf c}={\cal A}d(\rho)^{-1}\boldsymbol{\upsilon}(I)={\cal A}d(\rho)^{-1}\sum_\alpha\Omega^\alpha(I){\cal C}^\alpha,\end{equation}
where ${\bf c}=\left( c_1\ c_2\ \dots\ c_r\right)^T$. 
Once the invariants are obtained by solving the Euler-Lagrange equations,
this form of the conservation laws can be used to solve for the original dependent variables, as demonstrated in the motivating example of Section \ref{sec:motiv} and in the examples in Section \ref{sec:firstintsl2} provided the vector of constants is not zero. We note that if the $c_i$ are all zero,
the methods detailed in \cite{Mansfield} (\S 7) can be used.

We demonstrate the details in the following examples.

\vspace{0.5cm}
\textbf{Example \ref{sl2surfWITHLags} (cont.)} 
{\it In order to compute conservation laws we specify the order of the invariantised Lagrangian. Thus we will consider
\begin{equation}\label{novoinvL}
\int [L(\sigma,\sigma_x,\sigma_t,\kappa,\kappa_x\kappa_t)+\lambda(x,t)(\mathcal{D}_t\sigma-\mathcal{H}\kappa)]\mathrm{d}x\mathrm{d}t.
\end{equation}
We showed earlier that the terms involving $\lambda$ disappear in the calculation of the Euler-Lagrange equations, and here we show
they disappear in the calculation of the conservation laws.
Recall that $\mathcal{D}_\tau\sigma-\mathcal{H}\kappa=0$, where $\mathcal{H}=\mathcal{D}_x^3+2\sigma\mathcal{D}_x+\sigma_x$, is the syzygy between $\sigma$ and $\kappa$, and that with the introduction of a dummy variable $\tau$ we get a new invariant $I^u_\tau$, and thus an extra two syzygies
\begin{equation}\label{maisseis}
\mathcal{D}_\tau\begin{pmatrix}\sigma\\\kappa\end{pmatrix}=\begin{pmatrix}
\mathcal{D}_x^3+2\sigma\mathcal{D}_x+\sigma_x\\\mathcal{D}_t-\kappa\mathcal{D}_x+\kappa_x\end{pmatrix}I^u_\tau.
\end{equation}
To compute the vectors of invariants $\boldsymbol{\upsilon}_i(I)$ for $i=1,2$, as for the calculation of the Euler-Lagrange equation, we must differentiate (\ref{novoinvL}) with respect to $\tau$ under the integral sign and then integrate by parts in two steps. However, now we will keep track of the boundary terms. Thus after the first set of integration by parts we obtain
$$\begin{array}{l}
\displaystyle{\iint \Big[(\mathsf{E}^\sigma(L)-\lambda_t-\lambda\kappa_x+\lambda_x\kappa)\underbrace{\mathcal{D}_\tau \sigma}+(\mathsf{E}^\kappa(L)+\lambda_{xxx}+\lambda\sigma_x+2\lambda_x\sigma)\underbrace{\mathcal{D}_\tau \kappa}}\\[11pt]
\quad\displaystyle{+\mathcal{D}_x\left(\left(\frac{\partial L}{\partial \sigma_x}-\lambda\kappa\right)\mathcal{D}_\tau \sigma+\left(\frac{\partial L}{\partial \kappa_x}-2\lambda\sigma-\lambda_{xx}\right)\mathcal{D}_\tau \kappa+\lambda_x\mathcal{D}_x\mathcal{D}_\tau \kappa -\lambda\mathcal{D}_x^2\mathcal{D}_\tau \kappa\right)}\\[11pt]
\quad\displaystyle{+\mathcal{D}_t\left(\left(\frac{\partial L}{\partial \sigma_t}+\lambda\right)\mathcal{D}_\tau \sigma+\frac{\partial L}{\partial \kappa_t}\mathcal{D}_\tau \kappa\right)\Big]\mathrm{d}x\mathrm{d}t}.
\end{array}$$
After replacing the underlined syzygies using (\ref{maisseis}), we perform a second set of integration by parts which yields
$$\begin{array}{l}
\displaystyle{\iint \Big[\big((-\mathcal{D}_x^3-2\sigma\mathcal{D}_x-\sigma_x)\mathsf{E}^\sigma(L)+(-\mathcal{D}_t+\kappa\mathcal{D}_x+2\kappa_x)\mathsf{E}^\kappa(L)}\\[11pt]
\displaystyle{+\lambda(\mathcal{H}\kappa-\mathcal{D}_t\sigma)_x+2\lambda_x(\mathcal{H}\kappa-\mathcal{D}_t\sigma)\big)I^u_\tau}\\[11pt]
\displaystyle{+\mathcal{D}_x\Bigg(\Bigg(\frac{\partial L}{\partial \sigma_x}-\lambda\kappa\Bigg)\mathcal{D}_\tau\sigma+\Bigg(\frac{\partial L}{\partial \kappa_x}-2\lambda\sigma-\lambda_{xx}\Bigg)\mathcal{D}_\tau\kappa+\lambda_x\mathcal{D}_x\mathcal{D}_\tau\kappa-\lambda\mathcal{D}_x^2\mathcal{D}_\tau\kappa+\Big(\mathcal{D}_x^2\mathsf{E}^\sigma(L)}\\[11pt]
+2\sigma\mathsf{E}^\sigma(L)-\kappa\mathsf{E}^\kappa(L)-\lambda\kappa\sigma_x-2\lambda\sigma\kappa_x-\lambda\kappa_{xxx}-2\lambda_t\sigma-\lambda_x\kappa_{xx}+\lambda_{xx}\kappa_x-\lambda_{xxt}\Big)I^u_\tau\\[11pt]
+(-\mathcal{D}_x\mathsf{E}^\sigma(L)+\lambda\kappa_{xx}-\lambda_{xx}\kappa+\lambda_{xt})\mathcal{D}_xI^u_\tau+(\mathsf{E}^\sigma(L)-\lambda\kappa_x+\lambda_x\kappa-\lambda_t)\mathcal{D}_x^2I^u_\tau\Bigg)\\[11pt]
\displaystyle{+\mathcal{D}_t\Bigg(\Bigg(\frac{\partial L}{\partial \sigma_t}+\lambda\Bigg)\mathcal{D}_\tau\sigma+\frac{\partial L}{\partial \kappa_t}\mathcal{D}_\tau\kappa+(\mathsf{E}^\kappa(L)+\lambda\sigma_x+2\lambda_x\sigma+\lambda_{xxx})I^u_\tau\Bigg)\Big]\mathrm{d}x\mathrm{d}t.}
\end{array}$$
Due to the relation between $\sigma$ and $\kappa$, it can be checked that
all the terms involving $\lambda$ will disappear. Finally, substituting $\mathcal{D}_\tau\sigma$, $\mathcal{D}_\tau\kappa$, $\mathcal{D}_xI^u_\tau$ and $\mathcal{D}_x^2I^u_\tau$ respectively by the following invariant differential formulae 
$$\begin{array}{l}
\mathcal{D}_\tau \sigma=I^u_{111\tau}-\sigma I^u_{1\tau},\\
\mathcal{D}_\tau \kappa=I^u_{2\tau}-\kappa I^u_{1\tau},\\
\mathcal{D}_xI^u_\tau=I^u_{1\tau},\\
\mathcal{D}_x^2I^u_\tau=I^u_{11\tau}-\sigma I^u_\tau,
\end{array}$$
yields the boundary terms
$$\begin{array}{l}
\displaystyle{\mathcal{D}_x\left((\sigma\mathsf{E}^\sigma(L)-\kappa\mathsf{E}^\kappa(L)+\mathcal{D}_x^2\mathsf{E}^\sigma(L))I^u_\tau+\left(-\frac{\partial L}{\partial \kappa_x}\kappa-\frac{\partial L}{\partial \sigma_x}\sigma-\mathcal{D}_x\mathsf{E}^\sigma(L)\right)I^u_{1\tau}\right.}\\[11pt]
\displaystyle{\left.+\mathsf{E}^\sigma(L)I^u_{11\tau}+\frac{\partial L}{\partial \sigma_x}I^u_{111\tau}+\frac{\partial L}{\partial\kappa_x}I^u_{2\tau}\right)}\\[11pt]
\displaystyle{+\mathcal{D}_t\left(\mathsf{E}^\kappa(L)I^u_\tau+\left(-\frac{\partial L}{\partial \sigma_t}\sigma-\frac{\partial L}{\partial \kappa_t}\kappa\right)I^u_{1\tau}+\frac{\partial L}{\partial \sigma_t}I^u_{111\tau}+\frac{\partial L}{\partial \kappa_t}I^u_{2\tau}\right).}
\end{array}$$
Next, using the matrix of invariantised infinitesimals below
$$\Omega^u(I)=\begin{pmatrix}
0 & 2 & 0 & 2\sigma & 2\kappa\\
1 & 0 & 0 & 0 & 0\\
0 & 0 & -2 & 0 & 0
\end{pmatrix},$$
we get the vectors of invariants
$$\boldsymbol{\upsilon}_1(I)=\begin{pmatrix}
-2\mathcal{D}_x\mathsf{E}^\sigma(L)\\
\sigma\mathsf{E}^\sigma(L)-\kappa\mathsf{E}^\kappa(L)+\mathcal{D}_x^2\mathsf{E}^\sigma(L)\\
-2\mathsf{E}^\sigma(L)
\end{pmatrix},\qquad \boldsymbol{\upsilon}_2(I)=
\begin{pmatrix}
0\\
\mathsf{E}^\kappa(L)\\
0
\end{pmatrix}.$$

Finally inverting $\mathcal{A}d(g)$ found in Example \ref{SL2cont} and evaluating it at the frame (\ref{ProjSL2frame}) gives 
$$\mathcal{A}d(\rho)^{-1}=\begin{pmatrix}
\displaystyle{1-\frac{uu_{xx}}{u_x^2}} & \displaystyle{\frac{2u}{u_x}} & \displaystyle{\frac{u_{xx}}{u_x}-\frac{uu_{xx}^2}{2u_x^3}}\\[10pt]
\displaystyle{-\frac{u_{xx}}{2u_x^2}} & \displaystyle{\frac{1}{u_x}} & \displaystyle{-\frac{u_{xx}^2}{4u_x^3}}\\[10pt]
\displaystyle{-u+\frac{u^2u_{xx}}{2u_x^2}} & \displaystyle{-\frac{u^2}{u_x}} & \displaystyle{u_x-\frac{uu_{xx}}{u_x}+\frac{u^2u_{xx}^2}{4u_x^3}}
\end{pmatrix}.$$

Hence, the conservation laws are
\begin{equation}\label{ultimaconserv}
\mathcal{D}_x\left(\mathcal{A}d(\rho)^{-1}\begin{pmatrix}
\displaystyle{-2\frac{\mathrm{d}}{\mathrm{d}x}\mathsf{E}^\sigma(L)}\\[11pt]
\displaystyle{\sigma\mathsf{E}^\sigma(L)-\kappa\mathsf{E}^\kappa(L)+\frac{\mathrm{d}^2}{\mathrm{d}x^2}\mathsf{E}^\sigma(L)}\\[11pt]
-2\mathsf{E}^\sigma(L)
\end{pmatrix}\right)+\mathcal{D}_t\left(\mathcal{A}d(\rho)^{-1}\begin{pmatrix}
0\\[10pt]
\mathsf{E}^\kappa(L)\\[10pt]
0
\end{pmatrix}\right)=0.
\end{equation}
}

\begin{remark} Equation (\ref{ultimaconserv}) shows the structure of the conservation laws much more clearly than lengthy expressions in the original variables. It is in this sense that our theorem ``adds value'' to Noether's result.
\end{remark}

\section{Conservation laws from Semisimple Groups}\label{ultimo}
The result in Theorem \ref{tmel} gives the conservation laws for a variational problem in a particular form which we can use to advantage.
In this section, we consider one dimensional problems in the case that the group is semisimple, the case in which the Killing form $\mathbf{B}$  is invertible.
We show that one can then always obtain a first integral of the Euler-Lagrange system. We then examine all three inequivalent actions of $SL(2)$
in the plane, and show how our expression of the conservation laws can be used to radically simplify the integration problem in each case.

\begin{theorem}\label{simplificacao}
Consider $\mathbf{v}\in \mathcal{X}_G(M)$, where $\mathcal{X}_G(M)$ is a semisimple Lie algebra of infinitesimal vector fields that generate the transformation group $G$. Let $\mathbf{B}$ be the Killing form for $\mathcal{X}_G(M)$. Let $L(\kappa^\alpha,\kappa_s^\alpha,...)\mathrm{d}s$ be invariant under the group action of $G$, which leaves the only independent variable $s$ unchanged. Then
$$\boldsymbol{\upsilon}(I)^T\mathbf{B}^{-1}\boldsymbol{\upsilon}(I)=\mathbf{c}^T\mathbf{B}^{-1}\mathbf{c}$$
is a first integral for the Euler-Lagrange equations $\mathsf{E}^\alpha(L)=0$, for $\alpha=1,...,q$, where $\boldsymbol{\upsilon}(I)$ is given in Theorem \ref{tmel} and $\mathbf{c}$ is a constant vector.
\end{theorem}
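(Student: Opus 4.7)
The plan is to combine two facts that are already available in the excerpt: the explicit form of Noether's conservation laws from Theorem \ref{tmel} when there is only one independent variable (see Equation (\ref{NTfirstintinv})), namely
$$\mathbf{c}=\mathcal{A}d(\rho)^{-1}\boldsymbol{\upsilon}(I),\qquad\text{equivalently}\qquad \boldsymbol{\upsilon}(I)=\mathcal{A}d(\rho)\,\mathbf{c},$$
valid on solutions of the Euler-Lagrange system, and the $\mathcal{A}d$-invariance of the Killing form (Equation (\ref{BAdinveq})),
$$\mathcal{A}d(g)\,\mathbf{B}\,\mathcal{A}d(g)^T=\mathbf{B}\qquad\text{for all }g\in G.$$
Since $\mathcal{X}_G(M)$ is semisimple, Cartan's criterion tells us that $\mathbf{B}$ is invertible, so $\mathbf{B}^{-1}$ is well defined. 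This is the only place where the semisimplicity hypothesis is used, and it is essential.

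First I would invert the $\mathcal{A}d$-invariance identity. Inverting both sides of $\mathcal{A}d(g)\,\mathbf{B}\,\mathcal{A}d(g)^T=\mathbf{B}$ gives
$$\bigl(\mathcal{A}d(g)^{-1}\bigr)^{T}\mathbf{B}^{-1}\mathcal{A}d(g)^{-1}=\mathbf{B}^{-1}.$$
Replacing $g$ by $g^{-1}$ and using that $\mathcal{A}d$ is a group homomorphism, $\mathcal{A}d(g^{-1})=\mathcal{A}d(g)^{-1}$, this rearranges to
$$\mathcal{A}d(h)^{T}\mathbf{B}^{-1}\mathcal{A}d(h)=\mathbf{B}^{-1}\qquad\text{for all }h\in G,$$
so $\mathbf{B}^{-1}$ is itself an $\mathcal{A}d$-invariant bilinear form. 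Specialising $h=\rho$ yields the key algebraic identity
$$\mathcal{A}d(\rho)^{T}\mathbf{B}^{-1}\mathcal{A}d(\rho)=\mathbf{B}^{-1}.$$

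Combining this identity with $\boldsymbol{\upsilon}(I)=\mathcal{A}d(\rho)\mathbf{c}$, I compute directly
$$\boldsymbol{\upsilon}(I)^{T}\mathbf{B}^{-1}\boldsymbol{\upsilon}(I)=\mathbf{c}^{T}\mathcal{A}d(\rho)^{T}\mathbf{B}^{-1}\mathcal{A}d(\rho)\mathbf{c}=\mathbf{c}^{T}\mathbf{B}^{-1}\mathbf{c}.$$
The right-hand side is a constant since $\mathbf{c}$ is the vector of Noether constants of integration. Hence the left-hand side, which is an expression purely in the differential invariants through $\boldsymbol{\upsilon}(I)$, is constant on solutions of $\mathsf{E}^{\alpha}(L)=0$, producing the claimed first integral.

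There is essentially no technical obstacle in the argument; it is a two-line manipulation once the correct objects are assembled. The conceptual points to make explicit in the writeup are (i) why the semisimplicity is needed (to guarantee invertibility of $\mathbf{B}$, so that $\mathbf{B}^{-1}$ exists and defines the quadratic Casimir-type invariant $\mathbf{v}^T\mathbf{B}^{-1}\mathbf{v}$), and (ii) that this first integral is genuinely a relation among the differential invariants alone, independent of the frame $\rho$, which is the feature that makes the result useful for reducing the Euler-Lagrange system in the applications of Section \ref{ultimo}.
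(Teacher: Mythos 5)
Your proposal is correct and follows essentially the same route as the paper: both arguments combine the one-dimensional form of the conservation laws from Theorem \ref{tmel}, $\mathcal{A}d(\rho)^{-1}\boldsymbol{\upsilon}(I)=\mathbf{c}$, with the $\mathcal{A}d$-invariance of the Killing form (Equation (\ref{BAdinveq})) and the invertibility of $\mathbf{B}$ guaranteed by semisimplicity. The only difference is cosmetic — you substitute $\boldsymbol{\upsilon}(I)=\mathcal{A}d(\rho)\mathbf{c}$ and use $\mathcal{A}d(\rho)^{T}\mathbf{B}^{-1}\mathcal{A}d(\rho)=\mathbf{B}^{-1}$, while the paper substitutes $\mathbf{c}^{T}=\boldsymbol{\upsilon}(I)^{T}\mathcal{A}d(\rho)^{-T}$ and uses the inverted identity directly.
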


\begin{proof}
From Theorem \ref{tmel} we know that $\mathcal{A}d(\rho)^{-1}\boldsymbol{\upsilon}(I)=\mathbf{c}$. Since $\mathcal{X}_G(M)$ is semisimple, we can multiply both sides by $\mathbf{c}^T\mathbf{B}^{-1}$ and obtain
$$\mathbf{c}^T\mathbf{B}^{-1}\mathcal{A}d(\rho)^{-1}\boldsymbol{\upsilon}(I)=\mathbf{c}^T\mathbf{B}^{-1}\mathbf{c}.$$
Substituting the vector $\mathbf{c}^T$ by $\boldsymbol{\upsilon}(I)^T\mathcal{A}d(\rho)^{-T}$ on the left-hand side gives us 
\begin{equation}\label{quase}
\boldsymbol{\upsilon}(I)^T\mathcal{A}d(\rho)^{-T}\mathbf{B}^{-1}\mathcal{A}d(\rho)^{-1}\boldsymbol{\upsilon}(I)=\mathbf{c}^T\mathbf{B}^{-1}\mathbf{c}.
\end{equation}
Using Equation (\ref{BAdinveq}), i.e. $\mathbf{B}=\mathcal{A}d_g\mathbf{B}\mathcal{A}d_g^{\,T}$, we can simplify Equation (\ref{quase}) which yields the result. $\hfill \Box$
\end{proof}

Looking again at the equality $\mathcal{A}d(\rho)^{-1}\boldsymbol{\upsilon}(I)=\mathbf{c}$, multiplying both sides of it by $\mathbf{B}^{-1}$
and then using Equation (\ref{BAdinveq}), we obtain
$$\mathcal{A}d(\rho)^T\mathbf{B}^{-1}\boldsymbol{\upsilon}(I)=\mathbf{B}^{-1}\mathbf{c}.$$
In the examples that follow, we write the conservation laws in the form
\begin{equation}\label{remarksimp}
\Omega(z)^T\mathcal{A}d(\rho)^T\mathbf{B}^{-1}\boldsymbol{\upsilon}(I)=\Omega(z)^T\mathbf{B}^{-1}\mathbf{c},\end{equation}
which yields a remarkable simplification in the system to be solved.

\subsection{Integration Results for the $\mathbf{SL(2)}$ Actions on the Plane}\label{sec:firstintsl2}
In this section, we will calculate the conservation laws associated to variational problems that are invariant under the three inequivalent $SL(2,\mathbb{C})$ actions and find the solutions that extremise these variational problems.

We assume the vector $\mathbf{c}$ of constants is non-zero. 

So taking the coordinates of $\mathbb{C}^2$ to be $(x,u)$ and a generic element of $SL(2,\mathbb{C})$ to be
$$g=\begin{pmatrix}
a & b\\
c & d\end{pmatrix},$$
where $ad-bc=1$, then the three inequivalent actions are:
\begin{itemize}
\item[]\textbf{Action 1}
$$\displaystyle{\widetilde{x}=x,\qquad\widetilde{u}=\frac{au+b}{cu+d}},$$
\item[]\textbf{Action 2}
$$\displaystyle{\widetilde{x}=\frac{ax+b}{cx+d},\qquad \widetilde{u}=\frac{u}{(cx+d)^2}},$$
\item[]\textbf{Action 3}
$$\displaystyle{\widetilde{x}=\frac{ax+b}{cx+d},\qquad \widetilde{u}=6c(cx+d)+(cx+d)^2u}.$$
\end{itemize}

\subsubsection{$\mathbf{SL(2)}$ Action 1}
Consider the variational problem $\int L(\sigma,\sigma_s)\mathrm{d}s$ invariant under the $SL(2)$ Action $1$, with a frame defined by the normalisation equations 
$$\widetilde{u}=0,\qquad \widetilde{u_s}=1,\qquad\mathrm{and}\qquad \widetilde{u_{ss}}=0,$$
and the generating differential invariant $\{u;s\}=I^u_{111}=\sigma$. Then the Euler-Lagrange equation is 
$$\mathsf{E}^u(L)=(-\mathcal{D}_x^3-2\sigma\mathcal{D}_x-\sigma_x)\mathsf{E}^\sigma(L)=0$$
and the conservation laws  are
$$\begin{pmatrix}
\displaystyle{1-\frac{uu_{ss}}{u_s^2}} & \displaystyle{\frac{2u}{u_s}} & \displaystyle{\frac{u_{ss}}{u_s}-\frac{uu_{ss}^2}{2u_s^3}}\\[10pt]
\displaystyle{-\frac{u_{ss}}{2u_s^2}} & \displaystyle{\frac{1}{u_s}} & \displaystyle{-\frac{u_{ss}^2}{4u_s^3}}\\[10pt]
\displaystyle{-u+\frac{u^2u_{ss}}{2u_s^2}} & \displaystyle{-\frac{u^2}{u_s}} & \displaystyle{u_s-\frac{uu_{ss}}{u_s}+\frac{u^2u_{ss}^2}{4u_s^3}}
\end{pmatrix}\begin{pmatrix}
\displaystyle{-2\mathcal{D}_s\mathsf{E}^\sigma(L)}\\[10pt]
\displaystyle{\sigma\mathsf{E}^\sigma(L)+\mathcal{D}_s^2\mathsf{E}^\sigma(L)}\\[10pt]
-2\mathsf{E}^\sigma(L)
\end{pmatrix}=\begin{pmatrix}
c_1\\[10pt]c_2\\[10pt]c_3
\end{pmatrix}.$$

Now using  Theorem \ref{simplificacao}, where $\mathbf{B}$ is as in (\ref{killingmatrix}), we obtain the first integral of the Euler-Lagrange equation $\mathsf{E}^u(L)$,
$$4(\mathcal{D}_s\mathsf{E}^\sigma(L))^2-8\mathsf{E}^\sigma(L)\mathcal{D}_s^2\mathsf{E}^\sigma(L)-8\sigma(\mathsf{E}^\sigma(L))^2=c_1^2+4c_2c_3.$$

Next, rewriting the conservation laws in the form (\ref{remarksimp}), we obtain a simplified system containing the following equation 
\begin{eqnarray}
-2\mathsf{E}^\sigma(L)u_s-c_1u+c_2u^2-c_3=0.\label{systema11}
\end{eqnarray}
Equation (\ref{systema11}) is a first order ODE. It can be transformed into a Riccati equation with constant coefficients by setting $\tau=\int \frac{1}{2\mathsf{E}^\sigma(L)}\mathrm{d}s$, yielding
$$u_\tau=-c_1u+c_2u^2-c_3.$$
Thus, once we have solved for $\sigma$, the solution of Equation (\ref{systema11}) is
$$u(s)=\frac{c_1}{2c_2}-\frac{\beta}{2c_2}\tanh\left(\frac{1}{2}\beta f(s)\right),$$
where $\beta=\sqrt{c_2^2+4c_2c_3}$ and $f(s)=\int\frac{1}{2\mathsf{E}^\sigma(L)}\mathrm{d}s+c_4$.
We note the remaining equations coming from the conservation laws all then simplify to zero.

\subsubsection{$\mathbf{SL(2)}$ Action 2}
In this case, we reparametrise $(x, u(x))$ as $(x(s), u(s))$ and we may take an additional equation to fix the parametrisation,
provided the result leads to the full solution set. By construction, $s$ is invariant and thus $\mathcal{D}_s={\rm d}/{\rm d}s$.
We do this to simplify the calculation of the conservation laws, as it sends denominators to unity.
We take the frame for the $SL(2)$ Action 2 to be defined by the normalisation equations 
$$\widetilde{x}=0,\qquad \widetilde{u}=1,\qquad\mathrm{and}\qquad \widetilde{u_s}=0,$$
and the generating invariants are $I^x_1$ and $I^u_{11}$, which we will rename as $\eta$ and $\sigma$, respectively. 
The additional equation we take is $\eta=1$ and this is introduced as a constraint. Thus we consider the variational problem $\int [L(\sigma,\sigma_s,\sigma_{ss})-\lambda(s)(\eta-1)]\mathrm{d}s$
where $\lambda$ is the Lagrange multiplier. After using $\mathsf{E}^x(L)=0$ to eliminate $\lambda$, we obtain
$$\begin{array}{l}
\displaystyle{\mathsf{E}^u(L)=\mathcal{D}_s^2\mathsf{E}^\sigma(L)-2\sigma\mathsf{E}^\sigma(L)+L-\left(\frac{\partial L}{\partial \sigma_s}-\mathcal{D}_s\left(\frac{\partial L}{\partial \sigma_{ss}}\right)\right)\sigma_s+\frac{\partial L}{\partial \sigma_{ss}}\sigma_{ss}=0,}
\end{array}$$
and the conservation laws are
$$\begin{pmatrix}
\displaystyle{1-\frac{xu_s}{ux_s}} & \displaystyle{\frac{2x}{u}} & \displaystyle{\frac{u_s}{x_s}-\frac{xu_s^2}{2ux_s^2}}\\[10pt]
\displaystyle{-\frac{u_s}{2ux_s}} & \displaystyle{\frac{1}{u}} & \displaystyle{-\frac{u_s^2}{4ux_s^2}}\\[10pt]
\displaystyle{-x+\frac{x^2u_s}{2ux_s}} & \displaystyle{-\frac{x^2}{u}} & \displaystyle{u-\frac{xu_s}{x_s}+\frac{x^2u_s^2}{4ux_s^2}}\end{pmatrix}\boldsymbol{\upsilon}(I)=\begin{pmatrix}c_1\\c_2\\c_3\end{pmatrix},$$
where the vector of invariants $\boldsymbol{\upsilon}(I)$ is 
$$\boldsymbol{\upsilon}(I)=\begin{pmatrix}-2\mathcal{D}_s\mathsf{E}^\sigma(L)\\[10pt]
\mathsf{E}^\sigma(L)-2\sigma\mathsf{E}^\sigma(L)+L-\left(\frac{\partial L}{\partial \sigma_s}-\mathcal{D}_s\left(\frac{\partial L}{\partial \sigma_{ss}}\right)\right)\sigma_s-\frac{\partial L}{\partial \sigma_{ss}}\sigma_{ss}\\[10pt]
-2\mathsf{E}^\sigma(L)\end{pmatrix}.$$
Applying Theorem \ref{simplificacao}, we obtain the first integral of the Euler-Lagrange equation to be
\begin{eqnarray}
4(\mathcal{D}_s\mathsf{E}^\sigma(L))^2-8\mathsf{E}^\sigma(L)\Bigg(\mathsf{E}^\sigma(L)(1-2\sigma)+L\qquad\qquad\qquad\nonumber\\[10pt]
\left.-\left(\frac{\partial L}{\partial \sigma_s}-\mathcal{D}_s\left(\frac{\partial L}{\partial \sigma_{ss}}\right)\right)\sigma_s-\frac{\partial L}{\partial \sigma_{ss}}\sigma_{ss}\right)=c_1^2+4c_2c_3.\label{fiaction2}
\end{eqnarray}
Next, writing the laws in the form (\ref{remarksimp}), we obtain  the equation, 
\begin{eqnarray}
-2\mathsf{E}^\sigma(L)u-c_1x+c_2x^2-c_3=0.\label{systema21} 
\end{eqnarray}
Assuming we have solved the Euler-Lagrange equation for $\sigma$, we can solve this equation together with the constraint $\eta=1$ for $x$ and $u$. Recall that $\eta=I^x_1=\widetilde{x_s}|_{frame}=\frac{x_s}{u}=1$, thus $x_s=u$. Hence Equation (\ref{systema21}) becomes
$$-2\mathsf{E}^\sigma(L)x_s-c_1x+c_2x^2-c_3=0,$$
which is the same equation as Equation (\ref{systema11}). Thus, the solution for $x$ is 
$$x(s)=\frac{c_1}{2c_2}-\frac{\beta}{2c_2}\tanh\left(\frac{1}{2}\beta f(s)\right),$$
where $\beta=\sqrt{c_1^2+4c_2c_3}$ and $f(s)=\int\frac{1}{2\mathsf{E}^\sigma(L)}\mathrm{d}s+c_4$. Differentiating this with respect to $s$ will give us the solution for $u$;
$$u(s)=-\frac{\beta^2}{8c_2\mathsf{E}^\sigma(L)}\,\mathrm{sech}^2\left(\frac{1}{2}\beta f(s)\right).$$
Note that the restriction on $\eta$ does not lead to a reduction in the number of independent constants in the solution.
We note the remaining equations coming from the conservation laws all then simplify to zero.

\subsubsection{$\mathbf{SL(2)}$ Action 3}
Again, we reparametrise $(x, u(x))$ as $(x(s), u(s))$ and we may take an additional equation to fix the parametrisation,
provided the result leads to the full solution set. By construction, $s$ is invariant and thus $\mathcal{D}_s={\rm d}/{\rm d}s$,
which dramatically simplifies the calculations.
For a Lagrangian $L(\eta,\eta_s,\sigma,\sigma_s,\sigma_{ss})\mathrm{d}s$ invariant under the $SL(2)$ Action $3$, whose moving frame is defined by the normalisation equations 
$$\widetilde{x}=0,\qquad \widetilde{x_s}=1,\qquad\mathrm{and}\qquad \widetilde{u}=0,$$
and for which the set of generating invariants is $\{I^x_{11}=\eta,I^u_1=\sigma\}$, the Euler-Lagrange equations are 
$$\mathsf{E}^x(L)=\mathcal{D}_s^2\mathsf{E}^\eta(L)-\eta\mathcal{D}_s\mathsf{E}^\eta(L)-\frac{1}{3}\sigma\mathsf{E}^\eta(L)-\sigma\mathcal{D}_s\mathsf{E}^\sigma(L)+\eta\sigma\mathsf{E}^\sigma(L)-\sigma_s\mathsf{E}\sigma(L)=0,$$
$$\mathsf{E}^u(L)=\frac{1}{3}\mathsf{E}^\eta(L)-\mathcal{D}_s\mathsf{E}^\sigma(L)-\eta\mathsf{E}^\sigma(L)=0,$$
and their associated conservation laws are
\begin{equation}\label{quase4}
\begin{pmatrix}
\displaystyle{1+\frac{1}{3}xu} & \displaystyle{\frac{2x}{x_s}} & \displaystyle{-\frac{1}{18}xx_su^2-\frac{1}{3}x_su}\\[10pt]
\displaystyle{\frac{1}{6}u} & \displaystyle{\frac{1}{x_s}} & \displaystyle{-\frac{1}{36}x_su^2}\\[10pt]
\displaystyle{-x-\frac{1}{6}x^2u} & \displaystyle{-\frac{x^2}{x_s}} & \displaystyle{x_s+\frac{1}{3}xx_su+\frac{1}{36}x^2x_su^2}
\end{pmatrix}\begin{pmatrix} 2\mathsf{E}^\eta(L)\\[10pt]
-\mathcal{D}_s\mathsf{E}^\eta(L)+\sigma\mathsf{E}^\sigma(L)\\[10pt]
6\mathsf{E}^\sigma(L)\end{pmatrix}=
\begin{pmatrix}c_1\\[10pt]c_2\\[10pt]c_3
\end{pmatrix}.
\end{equation}

Applying Theorem \ref{simplificacao} to (\ref{quase4}) yields the following first integral for the Euler-Lagrange equations
$$4(\mathsf{E}^\eta(L))^2+24\sigma(\mathsf{E}^\sigma(L))^2-24\mathsf{E}^\sigma(L)\mathcal{D}_s\mathsf{E}^\eta(L)=c_1^2+4c_2c_3.$$

Next writing the conservation laws (\ref{quase4}) using (\ref{remarksimp}) we obtain two equations
\begin{eqnarray}\label{quase5}
6\mathsf{E}^\sigma(L)x_s-c_1x+c_2x^2-c_3=0,\\[10pt]\label{quase6}
2\mathsf{E}^\eta(L)x_s-2\mathsf{E}^\sigma(L)x_s^2u-c_1x_s+2c_2xx_s=0.
\end{eqnarray}
We assume we have first solved the Euler-Lagrange equations for $\sigma$ and $\eta$. Equation (\ref{quase5}) is a first order ODE which can be transformed into a Riccati equation with constant coefficients by setting $\tau=\int \frac{1}{6\mathsf{E}^\sigma(L)}\mathrm{d}s$. Thus, the solution to Equation (\ref{quase5}) is
$$x(s)=\frac{c_1}{2c_2}+\frac{\beta}{2c_2}\tanh\left(\frac{1}{2}\beta f(s)\right),$$
where $\beta=\sqrt{c_1^2+4c_2c_3}$ and $f(s)=\int \frac{1}{6\mathsf{E}^\sigma(L)}\mathrm{d}s+c_4$.
Now simplifying Equation (\ref{quase6}) yields 
$$6\mathsf{E}^\eta(L)-3c_1+6c_2x+u(-c_1x+c_2x^2-c_3)=0,$$
which is a linear equation for $u$. Hence,
$$u(s)=\frac{3c_1-6c_2x-6\mathsf{E}^\eta(L)}{-c_1x+c_2x^2-c_3}.$$
We note the remaining equation coming from the conservation laws simplifies to zero.

\section{Conclusion}
Noether's Theorem is a classical result giving conservation laws for Lie group invariant variational problems. Expressed in the original variables, the conservation laws for high order Lagrangians can have tens of terms which are difficult to analyse. In Theorem $7.4.1$ of \cite{Mansfield}, it is shown that for one dimensional variational problems the essential structure of the laws associated to these can be written in terms of differential invariants and a moving frame. In this paper we have generalised this result to higher dimensional variational problems. In this condensed view, the information contained in the laws becomes clearer.

The laws for one dimensional $SL(2)$ invariant Lagrangians are studied in detail, and we show that in the three inequivalent cases, our methods lead to a far simpler integration problem than that for in the original variables. In \cite{GoncalvesMansfieldI}, we will show the results for $SE(2)$ and $SE(3)$ invariant Lagrangians.

\end{document}